\newtheorem{theorem}{Theorem}
\newtheorem{proposition}[theorem]{Proposition}
\newtheorem{lemma}[theorem]{Lemma}
\newdefinition{definition}[theorem]{Definition}
\newdefinition{assumption}[theorem]{Assumption}
\newdefinition{remark}[theorem]{Remark}
\newdefinition{algorithm}[theorem]{Algorithm}
\newcommand{\R}{\mathbb{R}}
\newcommand{\N}{\mathbb{N}}
\newcommand{\X}{\mathbb{X}}
\newcommand{\U}{\mathbb{U}}
\def\argmin{\mathop{\rm argmin}}
\definecolor{shadecolor}{rgb}{1.,1.,1.}%
\definecolor{framecolor}{rgb}{.0,.0,.0}%
\journal{System \& Control Letters}
\begin{document}
\begin{frontmatter}

\title{Horizon Adaptation for Nonlinear Model Predictive Controllers with guaranteed Degree of Suboptimality}

\author{J\"{u}rgen Pannek}
\ead{juergen.pannek@googlemail.com}
\ead[url]{www.nonlinearmpc.com}
\address{Curtin University of Technology, Perth, 6845 WA, Australia\fnref{label3}}

\begin{abstract}
	We propose \textit{adaptation strategies} to modify the standard constrained model predictive controller scheme in order to guarantee a certain lower bound on the degree of suboptimality. Within this analysis, the length of the optimization horizon is the parameter we wish to adapt. We develop and prove several shortening and prolongation strategies which also allow for an effective implementation. Moreover, extensions of stability results and suboptimality estimates to model predictive controllers with varying optimization horizon are shown.
\end{abstract}

\begin{keyword}
nonlinear model predictive control \sep suboptimality \sep stability \sep adaptation strategies
\end{keyword}

\end{frontmatter}

\section{Introduction}
Nowadays, nonlinear model predictive controllers (NMPC), sometimes also called receding horizon controllers (RHC) are used in a variety of industrial applications, cf. \cite{QB2003}. As shown in \cite{MRRS2000,RM2009}, theory for such controllers is also widely understood both for linear and nonlinear systems. The control method itself deals with the problem of approximately solving an infinite horizon optimal control problem which is computationally intractable in general. Reasons for its success are on the one hand its capability to directly incorporate constraints depending on the states and inputs of the underlying process. On the other hand, the fundamental steps of this method are very simple: First, a solution of a finite horizon optimal control problem is computed for a given initial value. In a second step, the first part of the resulting control is implemented at the plant and in the third and last step, the finite horizon is shifted forward in time which renders this method to be iteratively applicable. As a consequence, the control which is applied at the plant is a static state feedback.

Due to considering only finite horizons, the inherent stability property of the infinite horizon problem does in general not carry over to the NMPC problem. To cope with the stability issue, several solutions have been proposed in the past, i.e. by imposing endpoints constraints \cite{KG1988} or adding so called Lyapunov function type endpoint weights and a terminal region to the NMPC problem \cite{CA1998}.  A third idea deals with the plain NMPC problem without the requirement of added constraints or a modified cost function. To show stability of the resulting closed loop, in \cite{GP2009, GR2008} a relaxed Lyapunov inequality is assumed.

In either case, the horizon needs to be chosen as a worst case scenario which is usually needed to cope with small regions of state space only. Our aim in this work is to develop online applicable adaptation strategies for the horizon length which guarantee stability of the closed loop. Here, we follow the third approach since the original intention of the infinite horizon cost stays untouched, and make use of the suboptimality estimates given in \cite{GP2009}. Based on the structure of these suboptimality estimates and on the structure of the NMPC problem itself, we propose several techniques to fit the horizon to the control task, the current state of the system and also to the internal information of the NMPC controller itself. Due to the change of the structure of the controller, however, known stability proofs and suboptimality results \cite{GMTT2005, GP2009, GPSW2010, GR2008} cannot be applied. To cover these issues, stability results with varying optimization horizons are presented.

To some extend adaptation strategies of the horizon are known in the literature, see, e.g., \cite{FLJ2006,VBRSB2008}. In contrast to these pure heuristics, our approach can be proven rigorously and doesnot require any insight into the process under consideration. Moreover, a change of the cost functional is possible without modification of the adaptation law which allows for testing various settings of the controller. Last, the quality of the resulting closed loop is tunable by a single variable characterizing the allowable tradeoff compared to the infinite horizon optimal control law.

The paper is organized as follows: In Section \ref{Section:Setup and Preliminaries} we describe the problem setup and state the a posteriori and a priori suboptimalty estimates which will be the foundation of our analysis. In the following Section \ref{Section:Adapting the NMPC Scheme}, we first show how known stability results and estimates can be extended to the case of varying optimization horizons. In Section \ref{Section:Adaptation Strategies}, we develop various shortening and prolongation strategies based on the suboptimality estimates from \cite{GP2009}. To show applicability and effectivity of the proposed methods we present numerical results in Section \ref{Section:Numerical Results}. Finally, Section \ref{Section:Conclusion} concludes the paper and points out directions of future research.

\section{Setup and Preliminaries}
\label{Section:Setup and Preliminaries}

In this work we consider nonlinear discrete time systems of the form
\begin{align}
	\label{Setup:nonlinear discrete time system}
	x(n + 1) = f(x(n), u(n)), \quad x(0) = x_0
\end{align}
with $x(n) \in X$ and $u(n) \in U$ for $n \in \N_0$ where $\N_0$ denotes the natural numbers including zero. In this context, the state space $X$ and the control value space $U$ are arbitrary metric spaces. Therefore, all presented results also apply to the discrete time dynamics induced by a sampled infinite dimensional system. 
State and control constraints can be incorporated by replacing $X$ and $U$ by appropriate subsets $\X \subset X$ and $\U \subset U$. Here, we denote the space of control sequences $u: \N_0 \rightarrow \U$ by $\U^{\N_0}$ and the solution trajectory for given control $u \in \U^{\N_0}$ and initial value $x_0 \in \X$ by $x_u(\cdot, x_0)$.

The task which we pursue is to find a static state feedback $u = \mu(x) \in \U^{\N_0}$ for a given control system \eqref{Setup:nonlinear discrete time system} which minimizes the \textit{infinite horizon cost functional} $J_\infty (x_0, u) = \sum_{n=0}^\infty l(x_u(n, x_0), u(n))$ with stage cost $l: \X \times \U \rightarrow \R_0^+$ where $\R_0^+$ denotes the nonnegative real numbers. The \textit{optimal value function} for this problem is denoted by $V_\infty(x_0) = \inf_{u \in \U^{\N_0}} J_\infty(x_0, u)$. Moreover, one can prove optimality of the \textit{infinite horizon feedback law} $\mu(\cdot)$ given by
\begin{align}
	\label{Setup:infinite control}
	\mu(x(n)) = \argmin_{u \in \U} \left\{ V_\infty(x_u(1, x(n))) + l(x(n), u) \right\}
\end{align}
using Bellman's optimality principle for a given optimal value function. Here we use the $\argmin$ operator in the following sense: for a map $a:\U \to \R$, a nonempty subset $\widetilde{\U} \subseteq \U$ and a value $u^\star \in \widetilde{\U}$ we write 
\begin{align} 
	\label{Setup:argmin}
	u^\star = \argmin_{u \in \widetilde{\U}} a(u)
\end{align} 
if and only if $a(u^\star) = \inf_{u \in \widetilde{\U}} a(u)$ holds. Whenever \eqref{Setup:argmin} holds the existence of the minimum $\min_{u \in \widetilde{\U}} a(u)$ follows. However, we do not require uniqueness of the minimizer $u^\star$. In case of uniqueness equation \eqref{Setup:argmin} can be understood as an assignment, otherwise it is just a convenient way of writing ``$u^\star$ minimizes $a(u)$''. Here we assume that the minimum with respect to $u \in \U$ is attained. 

Since the computation of the desired control law requires the solution of a Hamilton--Jacobi--Bellman equation, we use a model predictive control approach in order to avoid the problem of solving an infinite horizon optimal control problem. The NMPC methodology is simple and consists in three steps which are repeated at every discrete time instant during the process run: Upon start of each iterate, an optimal control for the problem on a finite horizon is computed. Then, the first element of the control is implemented at the process and in the third step the entire optimal control problem considered in the first step is shifted forward in time by one discrete time instant, see, e.g., \cite{MRRS2000} for an overview of this method.

Concerning the computing step of the control law, we consider a finite horizon optimal control problem, that is we minimize the truncated cost functional
\begin{align}
	\label{Setup:finite cost functional}
	J_N(x_0, u) = \sum\limits_{k = 0}^{N - 1} l(x_u(k, x_0), u(k)).
\end{align}
For reasons of clarity, we denote the closed loop solution at time instant $n$ by $x(n)$ throughout this work while $x_u(\cdot, x_0)$ denotes the open loop trajectory of the prediction. Moreover, we use the abbreviation
\begin{align}
	\label{Setup:open loop control}
	u_N(\cdot, x_0) = \argmin_{u \in \U^N} J_N(x_0, u) \quad \text{and} \quad u_N(x_0) = u_N(0, x_0)
\end{align}
for the minimizing \textit{open loop control} sequence of the truncated cost functional and its first element respectively. Moreover, we denote the optimal value function of the finite cost functional \eqref{Setup:finite cost functional} by $V_N(x_0) = \min_{u \in \U^{N}} J_N(x_0, u)$.\\
Given the initial value $x_{u_N}(0, x_0) = x_0$, the open loop control \eqref{Setup:open loop control} induces the \textit{open loop solution}
\begin{align}
	\label{Setup:open loop solution}
	x_{u_N}(k + 1, x_0) = f\left( x_{u_N}(k, x_0), u_N(k, x_0) \right), \quad \forall k \in \{0, \ldots, N-1\}.
\end{align}
Via the implementation and shift steps of the NMPC controller described earlier, we obtain a feedback control $\mu_N(\cdot)$ which can be defined via Bellman's principle of optimality
\begin{align}
	\label{Setup:closed loop control}
	\mu_N(x(n)) = \argmin_{u \in \U} \left\{ V_{N - 1}(x_u(1, x(n))) + l(x(n), u) \right\}.
\end{align}
Using the feedback $\mu_N(\cdot)$, the \textit{closed loop system} is given by
\begin{align}
	\label{Setup:closed loop solution}
	x(n + 1) = f\left( x(n), \mu_N(x(n)) \right), \quad x(0) = x_0, \; n \in \N_0.
\end{align}
In the following, we are interested in the stability and suboptimality properties of the closed loop solution \eqref{Setup:closed loop control}, \eqref{Setup:closed loop solution}. Note that due to the truncation of the infinite horizon cost functional, stability and optimality properties induced by the infinite horizon optimal control \eqref{Setup:infinite control} are not preserved in general. Here, we focus on the NMPC implementation without additional stabilizing endpoint constraints or a Lyapunov function type endpoint costs and a terminal region which are outlined in, e.g., \cite{KG1988} and \cite{CA1998} respectively.

Our aim in this work is to show that the requirement of considering the worst case optimization horizon $N$ for all initial values $x \in \X$ can be weakened without loosing stability of the closed loop \eqref{Setup:closed loop control}, \eqref{Setup:closed loop solution}. Additionally, we show that the resulting closed loop trajectory satisfies locally a predefined degree of suboptimality compared to the infinite horizon solution \eqref{Setup:nonlinear discrete time system}, \eqref{Setup:infinite control} with $u(n) = \mu(x(n))$. To this end, we compare the infinite horizon cost induced by the NMPC control law $\mu_N(\cdot)$, that is $V_\infty^{\mu_N} (x_0) := \sum_{n = 0}^\infty l\left( x(n), \mu_N(x(n)) \right)$, and the finite horizon cost $V_N(\cdot)$ or the infinite horizon optimal value function $V_\infty(\cdot)$. In particular, the latter gives us estimates about the degree of suboptimality of the controller $\mu_N(\cdot)$ of the NMPC process.

Note that since we do not assume terminal constraints to be imposed, feasibility of the NMPC scheme is an issue that cannot be neglected. In particular, without these constraints the closed loop trajectory might run into a dead end. To exclude such a scenario, we assume the following viability condition to hold. We like to note that in case of stabilizing endpoint constraints \cite{KG1988} or a terminal region \cite{CA1998} this assumption holds implicitely.
\begin{assumption}
	For each $x \in \X$ there exists a control $u \in \U$ such that $f(x, u) \in \X$.
\end{assumption}
In order to derive adaptation strategies for the horizon length in this setting, we make extensive use of the suboptimality estimates derived in \cite{GP2009}. Methods to evaluate these estimates rely on a rather straightforward and easily proved ``relaxed" version of the dynamic programming principle, see also \cite{GR2008,LR2006}. For proofs of the following estimate see \cite[Proposition 3]{GP2009}.
\begin{proposition}\label{Preliminaries:prop:trajectory a posteriori estimate}
	Consider a feedback law $\mu_N: \X \rightarrow \U$ and its associated trajectory $x(\cdot)$ according to \eqref{Setup:closed loop solution} with initial value $x(0) = x_0 \in \X$. If there exists a function $V_N: \X \rightarrow \R_0^+$ satisfying
	\begin{align}
		\label{Preliminaries:prop:trajectory a posteriori estimate:eq1}
		V_N(x(n)) \geq V_N(x(n+1)) + \alpha l(x(n), \mu_N(x(n)))
	\end{align}
	for some $\alpha \in [0, 1]$ and all $n \in \N_0$ then
	\begin{align}
		\label{Preliminaries:prop:trajectory a posteriori estimate:eq2}
		\alpha V_{\infty}(x(n)) \leq \alpha V_{\infty}^{\mu_N}(x(n)) \leq V_N(x(n))\leq V_\infty(x(n))
	\end{align}
	holds for all $n \in \N_0$.
\end{proposition}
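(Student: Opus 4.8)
The plan is to prove the four-term chain \eqref{Preliminaries:prop:trajectory a posteriori estimate:eq2} by isolating its central estimate $\alpha V_\infty^{\mu_N}(x(n)) \leq V_N(x(n))$, which carries the genuine content, and then deducing the two outer inequalities from optimality of $V_\infty$ and monotonicity of the cost. For the central estimate I would first rewrite the relaxed inequality \eqref{Preliminaries:prop:trajectory a posteriori estimate:eq1} as $\alpha\, l(x(k), \mu_N(x(k))) \leq V_N(x(k)) - V_N(x(k+1))$ and sum this telescoping expression over $k = n, n+1, \ldots, n+K$. The right-hand side then collapses to $V_N(x(n)) - V_N(x(n+K+1))$, and since $V_N$ takes values in $\R_0^+$ the subtracted term is non-negative, so $\alpha \sum_{k=n}^{n+K} l(x(k), \mu_N(x(k))) \leq V_N(x(n))$ uniformly in $K$.

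Letting $K \to \infty$ then yields $\alpha V_\infty^{\mu_N}(x(n)) \leq V_N(x(n))$, the middle inequality. The leftmost inequality $\alpha V_\infty(x(n)) \leq \alpha V_\infty^{\mu_N}(x(n))$ follows because $V_\infty(x(n)) = \inf_{u} J_\infty(x(n), u)$ is an infimum over all admissible control sequences, while the feedback $\mu_N$ generates one particular such sequence; hence $V_\infty(x(n)) \leq V_\infty^{\mu_N}(x(n))$, and multiplying by $\alpha \geq 0$ preserves the inequality. The rightmost inequality $V_N(x(n)) \leq V_\infty(x(n))$ is a property of the finite horizon value function itself: since the stage cost $l$ is non-negative, truncating any (near-)optimal infinite horizon control after $N$ steps cannot increase the accumulated cost, whence $V_N \leq V_\infty$ pointwise on $\X$.

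The step I expect to require the most care is the passage to the limit $K \to \infty$. For $\alpha > 0$ the uniform bound on the partial sums shows that the non-negative series defining $V_\infty^{\mu_N}(x(n))$ converges and is dominated by $V_N(x(n))/\alpha$, so the closed loop cost is in fact finite; the degenerate case $\alpha = 0$ must be handled separately, where the left two terms vanish and the chain reduces to $0 \leq 0 \leq V_N(x(n)) \leq V_\infty(x(n))$. Care is also needed to ensure that \eqref{Preliminaries:prop:trajectory a posteriori estimate:eq1} is applied along the actual closed loop trajectory \eqref{Setup:closed loop solution}, so that the telescoping genuinely links consecutive states $x(k)$ and $x(k+1)$ and the summed stage costs reassemble into $V_\infty^{\mu_N}(x(n))$.
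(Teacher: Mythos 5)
Your proof is correct and follows essentially the same route as the paper's (which defers to \cite[Proposition 3]{GP2009} and reuses the identical argument in the proof of Theorem \ref{ANMPC:thm:stability of adaptive NMPC}): telescoping the relaxed Lyapunov inequality \eqref{Preliminaries:prop:trajectory a posteriori estimate:eq1} along the closed loop, discarding the terminal term by nonnegativity of $V_N$, letting $K \to \infty$, and obtaining the outer inequalities from optimality of $V_\infty$ and truncation of the nonnegative stage costs. Your separate handling of $\alpha = 0$ and your observation that the rightmost inequality requires $V_N$ to be the finite horizon optimal value function are both appropriate refinements.
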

Note that Proposition \eqref{Preliminaries:prop:trajectory a posteriori estimate} is an {\em a posteriori} estimate since $V_N(x(n + 1))$ is not available at time $n$. A more conservative \textit{a priori} estimate is given in \cite[Theorem 7]{GP2009} using the following assumptions:
\begin{assumption}\label{Preliminaries:ass:apriori2}
	For given $N$, $\hat{N} \in \N$, $N \geq \hat{N} \geq 2$, there exists a constant $\gamma > 0$ such that for the open loop solution $x_{u_N}(k, x(n))$ given by \eqref{Setup:open loop solution} the inequalities
	\begin{align*}
		\frac{V_{\hat{N}}(x_{u_N}(N - \hat{N}, x(n)))}{\gamma + 1} & \leq \max_{j = 2, \ldots, \hat{N}} l(x_{u_N}(N - j, x(n)), \mu_{j - 1}(x_{u_N}(N - j, x(n)))) \\
		\frac{V_k(x_{u_N}(N - k, x(n)))}{\gamma + 1} & \leq l(x_{u_N}(N - k, x(n)), \mu_k(x_{u_N}(N - k, x(n))))
	\end{align*}
	hold for all $k \in \{\hat{N} + 1, \ldots, N\}$ and all $n \in \N_0$.
\end{assumption}
\begin{theorem}\label{Preliminaries:thm:apriori variante2}
	Consider $\gamma > 0$ and $N$, $\hat{N} \in \N$, $N \geq \hat{N}$ such that $(\gamma + 1)^{N - \hat{N}} > \gamma^{N - \hat{N} + 2}$ holds. If Assumption \ref{Preliminaries:ass:apriori2} is fulfilled for these $\gamma$, $N$ and $\hat{N}$, then \eqref{Preliminaries:prop:trajectory a posteriori estimate:eq2} holds for all $n \in \N_0$ where
	\begin{align}
		\label{Preliminaries:thm:apriori variante2:eq1}
		\alpha := \frac{(\gamma + 1)^{N - \hat{N}} - \gamma^{N - \hat{N} + 2}}{(\gamma + 1)^{N - \hat{N}}}.
	\end{align}
\end{theorem}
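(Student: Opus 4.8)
The plan is to derive the claim from the a posteriori estimate of Proposition~\ref{Preliminaries:prop:trajectory a posteriori estimate}: it suffices to verify that, under the stated hypotheses, the relaxed dynamic programming inequality \eqref{Preliminaries:prop:trajectory a posteriori estimate:eq1} holds for $\mu_N$, $V_N$ and exactly the constant $\alpha$ of \eqref{Preliminaries:thm:apriori variante2:eq1}; the conclusion \eqref{Preliminaries:prop:trajectory a posteriori estimate:eq2} then follows, provided $\alpha \in [0,1]$. Since neither the inequality nor $\alpha$ depends on the time index, I would fix $n$, abbreviate $x_0 := x(n)$, and argue along the open loop optimal trajectory $x_{u_N}(\cdot, x_0)$. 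Setting $\lambda_k := l(x_{u_N}(k, x_0), u_N(k, x_0))$ and the tail costs $S_i := \sum_{k=i}^{N-1} \lambda_k$, one has $V_N(x_0) = S_0$, $l(x(n), \mu_N(x(n))) = \lambda_0$ and $x(n+1) = x_{u_N}(1, x_0)$, so that \eqref{Preliminaries:prop:trajectory a posteriori estimate:eq1} reduces to the single estimate $V_N(x(n+1)) \le S_0 - \alpha \lambda_0$.

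First I would translate Assumption~\ref{Preliminaries:ass:apriori2} into inequalities among the $\lambda_k$. By the dynamic programming principle the tails of the horizon-$N$ optimal trajectory are again optimal, whence $V_k(x_{u_N}(N-k, x_0)) = S_{N-k}$ and the horizon-$k$ minimizer at $x_{u_N}(N-k, x_0)$ may be taken as the corresponding tail, giving $l(x_{u_N}(N-k, x_0), \mu_k(\cdot)) = \lambda_{N-k}$. The second family of inequalities then reads $S_i \le (\gamma+1)\lambda_i = (\gamma+1)(S_i - S_{i+1})$ for $i = 0, \ldots, N - \hat N - 1$, which is equivalent to the geometric contraction $S_{i+1} \le \tfrac{\gamma}{\gamma+1} S_i$; iterating over the $N - \hat N$ non-terminal steps yields $S_{N-\hat N} \le \bigl(\tfrac{\gamma}{\gamma+1}\bigr)^{N-\hat N} S_0$, and the case $i=0$ gives $\lambda_0 \ge \tfrac{1}{\gamma+1} S_0$. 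The first inequality, which involves the shorter horizon feedbacks $\mu_{j-1}$, bounds the terminal tail $S_{N-\hat N} = V_{\hat N}(x_{u_N}(N-\hat N, x_0))$ by a maximum of stage costs over the terminal block of length $\hat N$; this is the ingredient that isolates the last $\hat N$ steps from the first $N - \hat N$ ones.

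The crux is to bound $V_N(x(n+1))$ from above. Since $V_{N-1}(x(n+1)) = S_0 - \lambda_0$ by tail optimality, I would construct an admissible horizon-$N$ control at $x(n+1)$ by concatenating the shift of $u_N(\cdot, x_0)$ with one optimal terminal step, obtaining $V_N(x(n+1)) \le S_0 - \lambda_0 + r$, where $r$ is the cost of extending the trajectory past its terminal state. The remaining work is to bound $r$ through the two families above: the geometric decay controls $S_{N-\hat N}$, the terminal max-inequality converts it into a bound on the extension cost, and combining with $\lambda_0 \ge \tfrac{1}{\gamma+1} S_0$ should give $r \le \gamma^2 \bigl(\tfrac{\gamma}{\gamma+1}\bigr)^{N-\hat N} \lambda_0$. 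This is precisely $r \le (1-\alpha)\lambda_0$, since $1 - \gamma^2 \bigl(\tfrac{\gamma}{\gamma+1}\bigr)^{N-\hat N} = \bigl[(\gamma+1)^{N-\hat N} - \gamma^{N-\hat N+2}\bigr]/(\gamma+1)^{N-\hat N}$ coincides with \eqref{Preliminaries:thm:apriori variante2:eq1}, and the required estimate $V_N(x(n+1)) \le S_0 - \alpha\lambda_0$ follows.

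I expect the main obstacle to be exactly this last bound on $r$: propagating the two families of inequalities of Assumption~\ref{Preliminaries:ass:apriori2} through the terminal block without losing tightness, so that the worst admissible $\lambda$-sequence produces the stated $\alpha$ rather than a larger constant. Once $r \le (1-\alpha)\lambda_0$ is secured, it only remains to check that $\alpha$ is admissible for Proposition~\ref{Preliminaries:prop:trajectory a posteriori estimate}: the hypothesis $(\gamma+1)^{N-\hat N} > \gamma^{N-\hat N+2}$ is precisely the statement $\gamma^2\bigl(\tfrac{\gamma}{\gamma+1}\bigr)^{N-\hat N} < 1$, i.e. $\alpha > 0$, while $\alpha \le 1$ is immediate from $\gamma, \gamma+1 > 0$; hence $\alpha \in [0,1]$ and the proposition yields \eqref{Preliminaries:prop:trajectory a posteriori estimate:eq2} for all $n \in \N_0$.
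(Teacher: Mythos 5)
Your overall skeleton is sound, and it coincides with the route of the proof this theorem is actually quoted from (the present paper gives no proof of its own but cites [GP2009, Theorem~7]): reduce the claim to verifying the relaxed Lyapunov inequality \eqref{Preliminaries:prop:trajectory a posteriori estimate:eq1} for $V_N$, $\mu_N$ with the constant $\alpha$ of \eqref{Preliminaries:thm:apriori variante2:eq1} and then invoke Proposition~\ref{Preliminaries:prop:trajectory a posteriori estimate}; translate the second family of Assumption~\ref{Preliminaries:ass:apriori2} via tail optimality into the contraction $S_{i+1}\le\tfrac{\gamma}{\gamma+1}S_i$, hence $S_{N-\hat N}\le\bigl(\tfrac{\gamma}{\gamma+1}\bigr)^{N-\hat N}S_0$ and $\lambda_0\ge S_0/(\gamma+1)$; and check admissibility of $\alpha$ from $(\gamma+1)^{N-\hat N}>\gamma^{N-\hat N+2}$. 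All of these steps are correct.

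The gap is the step you yourself flag as the ``main obstacle,'' and it is not a deferred computation but the entire mathematical content of the theorem. Worse, the specific way you plan to close it cannot work. You bound $V_N(x(n+1))\le S_0-\lambda_0+r$ with $r=\min_{u}l(x_{u_N}(N,x_0),u)$ and then hope that Assumption~\ref{Preliminaries:ass:apriori2} yields $r\le(1-\alpha)\lambda_0$. But both families of inequalities in Assumption~\ref{Preliminaries:ass:apriori2} are \emph{upper} bounds on value functions at the interior points $x_{u_N}(N-j,x_0)$, $j\ge 2$ --- equivalently, \emph{lower} bounds on the stage costs appearing on their right-hand sides --- whereas an estimate on $r$ is a bound in the opposite direction, at the terminal point $x_{u_N}(N,x_0)$, about which the hypotheses say nothing: none of the quantities in Assumption~\ref{Preliminaries:ass:apriori2} (the tail values $V_k(x_{u_N}(N-k,x_0))$, $V_{\hat N}(x_{u_N}(N-\hat N,x_0))$, or the tail stage costs) is forced to evaluate $l$ at that point, so the one-step cost there can be arbitrarily large relative to $\lambda_0$ while all hypotheses hold with the same $\gamma$. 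The theorem survives such a situation only because $S_0-\lambda_0+r$ is merely one admissible upper bound for $V_N(x(n+1))$, and in that situation a very loose one. This is precisely why the first inequality of Assumption~\ref{Preliminaries:ass:apriori2} involves the shorter-horizon feedbacks $\mu_{j-1}$ at the points $x_{u_N}(N-j,x_0)$, $j=2,\ldots,\hat N$: the cited proof bounds $V_N(x(n+1))$ by a concatenated control that leaves the open-loop optimal trajectory \emph{inside} the terminal block and switches there to a shorter-horizon optimal control, so that the unknown continuation is absorbed into value functions the assumption does control, never into a stage cost at or beyond the endpoint. Your construction, which funnels the whole estimate through the endpoint cost $r$, discards exactly the information the max-inequality was designed to supply, and the missing bound $r\le(1-\alpha)\lambda_0$ is not a consequence of the hypotheses.
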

Comparing the estimates from \cite{GP2009, GR2008, SX1997}, we call the maximal value of $\alpha$ satisfying \eqref{Preliminaries:prop:trajectory a posteriori estimate:eq1} {\it local suboptimality degree} if $x(n) \in \X$ is fixed. For a given closed loop trajectory $x(\cdot)$ we call $\alpha := \max \{ \alpha \mid \text{\eqref{Preliminaries:prop:trajectory a posteriori estimate:eq1} holds $\forall n \in \N_0$} \}$ the {\it closed loop suboptimality degree} and for a given set $\X$ we call $\alpha := \max \{ \alpha \mid \text{\eqref{Preliminaries:prop:trajectory a posteriori estimate:eq1} with $x(n) = x$ holds $\forall x \in \X$} \}$ the {\it global suboptimality degree}.

Unfortunately, we cannot expect the relaxed Lyapunov inequality \eqref{Preliminaries:prop:trajectory a posteriori estimate:eq1} or Assumption \ref{Preliminaries:ass:apriori2} to hold in practice for the following reason: In many cases the discrete time system \eqref{Setup:nonlinear discrete time system} is obtained from a discretization of a continuous time system, e.g. sampling with zero order hold, see \cite{NT2004,NTK1999}. Hence, even if, e.g., a continuous time system is stabilizable to a setpoint $x^*$ and no numerical errors occur during optimization and integration, the corresponding sampled--data system is most likely practically stabilizable at $x^*$ only.

For this reason, the a posteriori and a priori estimates from Proposition \ref{Preliminaries:prop:trajectory a posteriori estimate} and Theorem \ref{Preliminaries:thm:apriori variante2} have been extended to cover the case of practical stability as well, see also \cite[Proposition 14 and Theorem 20]{GP2009}.  Since these practical suboptimality estimates can be used in a similar manner as in the non--practical case, we show stability of the horizon adaptation technique and adaptation strategies for the non--practical case only. Corresponding results can be found in \cite[Chapter 3 and 4]{P2009b}.

\section{Adapting the NMPC Scheme}
\label{Section:Adapting the NMPC Scheme}

Since the suboptimality estimates from Proposition \ref{Preliminaries:prop:trajectory a posteriori estimate} and Theorem \ref{Preliminaries:thm:apriori variante2} are computable online, we may utilize them to repeatedly adapt the optimization horizon. To this end, we extend our notation of the horizon length from $N$ to $N_n$, of the corresponding local suboptimality degree from $\alpha$ to $\alpha(N_n)$ and of the closed loop control law from $\mu_N(\cdot)$ to $\mu_{(N_n)}(\cdot)$ where $n$ indicates the discrete time instant. Defining a fixed suboptimality bound $\overline{\alpha} \in (0, 1)$, we propose the following algorithm to guarantee local suboptimality degree $\overline{\alpha}$:

\begin{algorithm}\label{ANMPC:alg:algorithm}
	Set $n := 0$ and choose $\overline{\alpha} \in (0, 1)$ and $N_n \in \N$.
	\begin{enumerate}
		\item Obtain new measurements $x(n)$.
		\item Set $\tilde{\alpha} = 0$. While $\tilde{\alpha} \leq \overline{\alpha}$ do
		\begin{enumerate}
			\item Compute the open loop optimal control sequence $u_N(\cdot, x(n))$ from \eqref{Setup:open loop control}
			\item Compute suboptimality degree $\tilde{\alpha} := \alpha(N_n)$ from, e.g., Proposition \ref{Preliminaries:prop:trajectory a posteriori estimate} or Theorem \ref{Preliminaries:thm:apriori variante2}
			\item If $\tilde{\alpha} \geq \overline{\alpha}$: Call shortening strategy for $N_n$ \\
			Else: Call prolongation strategy for $N_n$
		\end{enumerate}
		\item Implement $\mu_{(N_n)}(x(n)) := u(0, x(n))$, set $n := n + 1$ and goto Step 1.
	\end{enumerate}
\end{algorithm}

The problem which we are facing for such an adaptive MPC algorithm is the fact that none of the existing stability proofs, see, e.g., \cite{GMTT2005, GP2009, GPSW2010}, can be applied in this context since these results assume $N$ to be constant while here the optimization horizon $N_n$ may change in every step of the MPC algorithm. The major obstacle to apply the idea of Proposition \ref{Preliminaries:prop:trajectory a posteriori estimate} in the context of varying optimization horizons $N_n$ is the lack of a common Lyapunov function along the closed loop. To compensate for this deficiency, we assume that if for a horizon length $N_n$ we have $\alpha(N_n) \geq \overline{\alpha}$, then the controller shows a bounded guaranteed performance if $N_n$ is increased. For ease of notation, we give this assumption in a set valued manner, however, within the following stability proof it is only required to hold along the closed loop.

\begin{assumption}\label{ANMPC:ass:enhanced stabilizing}
	Given an initial value $x \in \X$ and a horizon length $N < \infty$ such that $\mu_N(\cdot)$ guarantees local suboptimality degree $\alpha(N) \geq \overline{\alpha}$, $\overline{\alpha} \in (0, 1)$, we assume that for $\widetilde{N} \geq N$, $\widetilde{N} < \infty$, there exist constants $C_l, C_\alpha > 0$ such that the inequalities
	\begin{align}
		\label{ANMPC:ass:enhanced stabilizing:eq1}
		l(x, \mu_{N}(x)) & \leq C_l l(x, \mu_{\widetilde{N}}(x)) \frac{V_{\widetilde{N}}(x) - V_{\widetilde{N}}(f(x, \mu_N(x))}{V_{\widetilde{N}}(x) - V_{\widetilde{N}}(f(x, \mu_{\widetilde{N}}(x))} \\
		\label{ANMPC:ass:enhanced stabilizing:eq2}
		\alpha(N) & \leq \frac{1}{C_\alpha} \alpha(\widetilde{N})
	\end{align}
	hold where $\alpha(\widetilde{N})$ is the local suboptimality degree of the controller $\mu_{\widetilde{N}}(\cdot)$ corresponding to the horizon length $\widetilde{N}$.
\end{assumption}

Note that Assumption \ref{ANMPC:ass:enhanced stabilizing} is indeed not very restrictive since we allow for non--monotone developments of the suboptimality degree $\alpha(\cdot)$ if the horizon length is increased which may occur as shown in \cite{DPM2007}. Moreover, we only make sure that if a certain suboptimality degree $\overline{\alpha} \in (0, 1)$ holds for a horizon length $N$, then the estimate $\alpha(\widetilde{N})$ does not drop below zero if the horizon length $\widetilde{N}$ is increased.

Using Assumption \ref{ANMPC:ass:enhanced stabilizing} to hold along the closed loop, we obtain stability and a performance estimate of the closed loop for changing horizon lengths:

\begin{theorem}\label{ANMPC:thm:stability of adaptive NMPC}
	Consider $\overline{\alpha} \in (0, 1)$ and a sequence $(N_n)_{n \in \N_0}$, $N_i \in \N$, where $N^\star = \max \{ N_n \, | \; n \in \N_0 \}$, such that the NMPC feedback law $\mu_{(N_n)}$ defining the closed loop solution \eqref{Setup:closed loop solution} guarantees
	\begin{align}
		\label{ANMPC:thm:stability of adaptive NMPC:eq1}
		V_{N_n}(x(n)) \geq V_{N_n}(x(n+1)) + \overline{\alpha} l(x(n), \mu_{N_n}(x(n)))
	\end{align}
	for all $n \in \N_0$. Moreover suppose Assumption \ref{ANMPC:ass:enhanced stabilizing} to hold for all pairs $(x(n), N_n)$, $n \in \N_0$. Then we obtain
	\begin{align}
		\label{ANMPC:thm:stability of adaptive NMPC:eq2}
		\alpha_{C} V_\infty(x(n)) \leq \alpha_{C} V_\infty^{\mu_{(N_n)}}(x(n)) \leq V_{N^\star}(x(n)) \leq V_\infty(x(n))
	\end{align}
	to hold for all $n \in \N_0$ with $\alpha_{C} := \min\limits_{j \in \N_0, j \geq n} \frac{C_\alpha(j)}{C_l(j)} \overline{\alpha}$ and $C_\alpha(j), C_l(j)$ from \eqref{ANMPC:ass:enhanced stabilizing:eq1}, \eqref{ANMPC:ass:enhanced stabilizing:eq2} for $x = x(j)$, $j \geq n \in \N_0$.
\end{theorem}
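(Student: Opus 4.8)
The plan is to reduce the claim to the \emph{a posteriori} estimate of Proposition~\ref{Preliminaries:prop:trajectory a posteriori estimate} by using the single, fixed function $V_{N^\star}$ as a common relaxed Lyapunov function along the closed loop. Concretely, I aim to verify that
\begin{align*}
	V_{N^\star}(x(j)) \geq V_{N^\star}(x(j+1)) + \alpha_{C}\, l(x(j), \mu_{N_j}(x(j)))
\end{align*}
holds for all indices $j \geq n$, so that Proposition~\ref{Preliminaries:prop:trajectory a posteriori estimate}, applied to the closed-loop trajectory started at $x(n)$ with the fixed horizon $N^\star$ and $\alpha = \alpha_{C}$, delivers \eqref{ANMPC:thm:stability of adaptive NMPC:eq2} by its telescoping argument.

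The central step is to turn the hypothesis \eqref{ANMPC:thm:stability of adaptive NMPC:eq1}, which only controls the \emph{varying} functions $V_{N_j}$, into a decrease condition for the fixed $V_{N^\star}$. Fixing $j$, I would invoke Assumption~\ref{ANMPC:ass:enhanced stabilizing} with $N = N_j$, $\widetilde{N} = N^\star \geq N_j$ and $x = x(j)$. The decisive observation is that, since $x(j+1) = f(x(j), \mu_{N_j}(x(j)))$, the numerator $V_{N^\star}(x(j)) - V_{N^\star}(f(x(j), \mu_{N_j}(x(j))))$ occurring in \eqref{ANMPC:ass:enhanced stabilizing:eq1} is \emph{exactly} the closed-loop decrease $V_{N^\star}(x(j)) - V_{N^\star}(x(j+1))$ to be bounded from below, while the denominator $V_{N^\star}(x(j)) - V_{N^\star}(f(x(j), \mu_{N^\star}(x(j))))$ is the decrease of $V_{N^\star}$ under its own feedback and is therefore bounded below by $\alpha(N^\star)\, l(x(j), \mu_{N^\star}(x(j)))$ by the very definition of the local suboptimality degree $\alpha(N^\star)$.

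Substituting this lower bound cancels the common factor $l(x(j), \mu_{N^\star}(x(j)))$, and \eqref{ANMPC:ass:enhanced stabilizing:eq1} rearranges to
\begin{align*}
	V_{N^\star}(x(j)) - V_{N^\star}(x(j+1)) \geq \frac{\alpha(N^\star)}{C_l(j)}\, l(x(j), \mu_{N_j}(x(j))).
\end{align*}
Next I would exploit \eqref{ANMPC:ass:enhanced stabilizing:eq2} in the form $\alpha(N^\star) \geq C_\alpha(j)\, \alpha(N_j)$, together with the fact that \eqref{ANMPC:thm:stability of adaptive NMPC:eq1} forces the local degree to obey $\alpha(N_j) \geq \overline{\alpha}$, to obtain $\alpha(N^\star) \geq C_\alpha(j)\,\overline{\alpha}$. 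Inserting this and then passing to the uniform constant $\alpha_{C} = \min_{j \geq n} \tfrac{C_\alpha(j)}{C_l(j)}\overline{\alpha}$ establishes the displayed relaxed Lyapunov inequality for $V_{N^\star}$ at every $j \geq n$, which closes the argument.

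I expect the main obstacle to be precisely the identification and cancellation described above: one must recognise that the opaque ratio in \eqref{ANMPC:ass:enhanced stabilizing:eq1} is engineered so that its numerator is the \emph{actual} decrease of the fixed value function $V_{N^\star}$ under the applied control $\mu_{N_j}$, while its denominator is absorbed by the defining inequality of $\alpha(N^\star)$. A minor secondary check is to confirm $\alpha_{C} \in [0,1]$ so that Proposition~\ref{Preliminaries:prop:trajectory a posteriori estimate} is applicable: nonnegativity is immediate from $C_l, C_\alpha, \overline{\alpha} > 0$, whereas $\alpha_{C} \leq 1$ follows from Bellman's principle combined with the monotonicity $V_{N^\star - 1} \leq V_{N^\star}$, which gives $V_{N^\star}(x(j)) - V_{N^\star}(x(j+1)) \leq l(x(j), \mu_{N_j}(x(j)))$.
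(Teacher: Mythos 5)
Your proof is correct and follows essentially the same route as the paper: both convert the varying-horizon decrease \eqref{ANMPC:thm:stability of adaptive NMPC:eq1} into a relaxed Lyapunov inequality for the fixed function $V_{N^\star}$ by invoking Assumption \ref{ANMPC:ass:enhanced stabilizing} with $\widetilde{N} = N^\star$, using the chain $\overline{\alpha} \leq \alpha(N_j) \leq \alpha(N^\star)/C_\alpha(j)$ and cancelling the denominator of \eqref{ANMPC:ass:enhanced stabilizing:eq1} against the defining inequality of $\alpha(N^\star)$, and then conclude by the telescoping argument of Proposition \ref{Preliminaries:prop:trajectory a posteriori estimate} (which the paper likewise cites as ``an identical telescope sum argument'' rather than applying the proposition verbatim, since the feedback here is time-varying).
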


\begin{proof}
	Given a pair $(x(n), N_n)$, Assumption \ref{ANMPC:ass:enhanced stabilizing} guarantees $\alpha(N_n) \leq \alpha(\widetilde{N})/ C_\alpha^{(n)}$ for $\widetilde{N} \geq N_n$. Now we choose $\widetilde{N} = N^\star$ within this local suboptimality estimate. Hence, we obtain $\overline{\alpha} \leq \alpha(N_n) \leq \alpha(N^\star) / C_\alpha(n)$ using the relaxed Lyapunov inequality \eqref{ANMPC:thm:stability of adaptive NMPC:eq1}. Multiplying by $l(x(n), \mu_{N_n}(x(n)))$ and using \eqref{ANMPC:ass:enhanced stabilizing:eq1}, we can conclude $\overline{\alpha} l(x(n), \mu_{N_n}(x(n))) \leq \frac{C_l(n)}{C_\alpha(n)} \left( V_{N^\star}(x(n)) - V_{N^\star}(x(n+1)) \right)$. Since the latter condition relates the closed loop varying optimization horizon to a fixed one, it allows us to use an identical telescope sum argument as in the proof of \cite[Proposition 3]{GP2009}. Hence, summing the running costs along the closed loop trajectory reveals $\alpha_{C} \sum_{j = n}^{K} l(x(j), \mu_{N_j}(x(j))) \leq V_{N^\star}(x(n)) - V_{N^\star}(x(K+1))$ where we defined $\alpha_{C} := \min\limits_{j \in [n, \ldots, K]} \frac{C_\alpha(j)}{C_l(j)} \overline{\alpha}$ with constants $C_\alpha(j)$ and $C_l(j)$ from \eqref{ANMPC:ass:enhanced stabilizing:eq1} and \eqref{ANMPC:ass:enhanced stabilizing:eq2} for $x = x(j)$ and $j \in \{n, \ldots, K\}$. Since $V_{N^\star}(x(K+1)) \geq 0$ holds, taking $K$ to infinity reveals
	\begin{align*}
		\alpha_{C} V_\infty^{\mu_{(N_n)}}(x(n)) = \alpha_C \lim\limits_{K \rightarrow \infty} \sum\limits_{j = n}^{K} l(x(j), \mu_{N_j}(x(j))) \leq V_{N^\star}(x(n)).
	\end{align*}
	Since the first and the last inequality of \eqref{ANMPC:thm:stability of adaptive NMPC:eq2} hold by the principle of optimality, the assertion follows.
\end{proof}

Similar to Proposition \ref{Preliminaries:prop:trajectory a posteriori estimate}, Theorem \ref{ANMPC:thm:stability of adaptive NMPC} can be extended to the practical case, cf. \cite[Chapter 4]{P2009b}. We like to point out that Theorem \ref{ANMPC:thm:stability of adaptive NMPC} is a generalization of Proposition \ref{Preliminaries:prop:trajectory a posteriori estimate} which is reobtained if $N_n = N$ for all $n \in \N_0$. 

Note that the closed loop estimate $\alpha_C$ in \eqref{ANMPC:thm:stability of adaptive NMPC:eq2} may be smaller than the local suboptimality bound $\overline{\alpha}$.  In particular, since $l(x, \mu_{\widetilde{N}}(x))$ may tend to zero if $\widetilde{N}$ is increased, we obtain that $C_l$ in \eqref{ANMPC:ass:enhanced stabilizing:eq1} is in general unbounded. The special case $l(x, \mu_{\widetilde{N}}(x)) = 0$, however, states that the equilibrium of our problem has been reached and can be neglected in this context, i.e. outside the equilibrium $\alpha_C > 0$ is always retained. Yet, $\alpha_C$ may become very small depending on $C_\alpha$ and $C_l$ from Assumption \ref{ANMPC:ass:enhanced stabilizing}. During our numerical experiments, however, no such case occured, see also Section \ref{Section:Numerical Results}.


\section{Adaptation Strategies}
\label{Section:Adaptation Strategies}

As we have seen in Theorem \ref{ANMPC:thm:stability of adaptive NMPC}, the methods from Proposition \ref{Preliminaries:prop:trajectory a posteriori estimate} and Theorem \ref{Preliminaries:thm:apriori variante2} can be applied to compute a suboptimality estimate for a given pair $(x(n), N_n)$. Yet, these local estimates have to reinterpreted along the closed loop. In particular, we require the existence of a finite horizon length $N_n$ guaranteeing stability with suboptimality degree greater than $\overline{\alpha}$ in order to conclude finite termination of the algorithm proposed in Section \ref{Section:Adapting the NMPC Scheme}.

\begin{assumption}\label{ANMPC:ass:stabilizable}
	Given $\overline{\alpha} \in (0, 1)$, for all $x_0 \in \X$ there exists a finite horizon length $\overline{N} = N(x_0) \in \N$ such that the relaxed Lyapunov inequality \eqref{Preliminaries:prop:trajectory a posteriori estimate:eq1} holds with $\alpha(N) \geq \overline{\alpha}$ for all horizon lengths $N \geq \overline{N}$.
\end{assumption}

Note that Assumption \ref{ANMPC:ass:stabilizable} is satisfied if $\overline{\alpha}$ is small enough and $\overline{N}$ is large enough, see, e.g., \cite{GMTT2005}.

\subsection{Simple Adaptation Strategies}

A basic adaptation technique for the horizon length can be obtained using Proposition \ref{Preliminaries:prop:trajectory a posteriori estimate}. In particular, we can repeatedly shorten the horizon length and check Assumption \eqref{Preliminaries:prop:trajectory a posteriori estimate:eq1} as the solution evolves:

\begin{theorem}\label{ANMPC:thm:stepsize shortening}
	Consider an optimal control problem \eqref{Setup:open loop control}, \eqref{Setup:open loop solution} with initial value $x_0 = x(n)$, $N_n \in \N$ and $\overline{\alpha} \in (0, 1)$ to be fixed and denote the optimal control sequence by $u^\star$. Suppose there exists an integer $\overline{k} \in \N_0$, $0 \leq \overline{k} < N_n$ such that
	\begin{align}
		\label{ANMPC:thm:stepsize shortening:eq1}
		V_{N_n-k}(x_{u_N}(k, x_0)) \geq V_{N_n-k}(x_{u^\star}(k + 1, x_0)) + \overline{\alpha} l(x_{u^\star}(k, x_0), \mu_{N_n-k}(x_{u^\star}(k, x_0)))
	\end{align}
	holds true for all $0 \leq k \leq \overline{k}$. Then, setting $N_{n + k}= N_n - k$ and $\mu_{N_{n + k}}(x(n + k))= u^\star(k)$ for $0 \leq k \leq \overline{k}$, inequality \eqref{ANMPC:thm:stability of adaptive NMPC:eq1} holds for $k = n, \ldots, n + \overline{k}$ with $\alpha = \overline{\alpha}$.
	\end{theorem}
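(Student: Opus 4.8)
The plan is to exploit the fact that the shortening rule prescribed in the statement does \emph{not} re-optimize at each step but merely replays the single open loop control $u^\star = u_{N_n}(\cdot, x_0)$ computed at time $n$ while decreasing the horizon by one at each instant. Consequently the closed loop trajectory generated by the feedback assignments $\mu_{N_{n+k}}(x(n+k)) = u^\star(k)$ should coincide with the open loop trajectory $x_{u^\star}(\cdot, x_0)$ of that single optimization, and the hypothesis \eqref{ANMPC:thm:stepsize shortening:eq1}, read along that trajectory, is precisely the relaxed Lyapunov inequality \eqref{ANMPC:thm:stability of adaptive NMPC:eq1} we wish to establish. The proof therefore reduces to two identifications: that of the states and that of the applied control with the NMPC feedback.

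First I would identify the closed and open loop states by induction on $k$. For $k = 0$ we have $x(n) = x_0 = x_{u^\star}(0, x_0)$ by assumption. Assuming $x(n+k) = x_{u^\star}(k, x_0)$ for some $0 \leq k < \overline{k}$, the closed loop relation \eqref{Setup:closed loop solution} together with the assignment $\mu_{N_{n+k}}(x(n+k)) = u^\star(k)$ gives $x(n+k+1) = f(x_{u^\star}(k, x_0), u^\star(k)) = x_{u^\star}(k+1, x_0)$ by \eqref{Setup:open loop solution}, which closes the induction. In particular $x(n+k) = x_{u^\star}(k, x_0) = x_{u_N}(k, x_0)$ for all $0 \leq k \leq \overline{k}+1$, since $u_N$ in \eqref{ANMPC:thm:stepsize shortening:eq1} denotes exactly the optimal sequence $u^\star$ of horizon $N_n$.

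Next I would justify, via Bellman's principle of optimality, that the assignment $\mu_{N_{n+k}}(x(n+k)) = u^\star(k)$ is consistent with the NMPC feedback definition \eqref{Setup:closed loop control} for the shortened horizon $N_{n+k} = N_n - k$. The key observation is that the tail $(u^\star(k), \ldots, u^\star(N_n - 1))$ is an optimal control sequence for the finite horizon problem \eqref{Setup:finite cost functional} of length $N_n - k$ with initial value $x_{u^\star}(k, x_0)$: any cheaper tail could be spliced with the fixed initial segment $(u^\star(0), \ldots, u^\star(k-1))$ to produce a control of length $N_n$ from $x_0$ undercutting $u^\star$, contradicting its optimality. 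Hence $V_{N_n - k}(x_{u^\star}(k, x_0))$ equals the cost of this tail and its first element $u^\star(k)$ is a valid minimizer in the sense of \eqref{Setup:argmin}, so that indeed $\mu_{N_n - k}(x_{u^\star}(k, x_0)) = u^\star(k)$.

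With both identifications in place, inequality \eqref{ANMPC:thm:stability of adaptive NMPC:eq1} evaluated at the time instant $n+k$ with horizon $N_{n+k} = N_n - k$ reads $V_{N_n-k}(x_{u^\star}(k, x_0)) \geq V_{N_n-k}(x_{u^\star}(k+1, x_0)) + \overline{\alpha}\, l(x_{u^\star}(k, x_0), u^\star(k))$, which is word for word the hypothesis \eqref{ANMPC:thm:stepsize shortening:eq1} assumed to hold for every $0 \leq k \leq \overline{k}$. This yields \eqref{ANMPC:thm:stability of adaptive NMPC:eq1} at the instants $n, \ldots, n+\overline{k}$ with $\alpha = \overline{\alpha}$, as claimed. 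I expect the only genuinely delicate point to be the dynamic programming step of the third paragraph: one must verify carefully that the value functions of the shortened problems satisfy $V_{N_n-k}(x_{u^\star}(k, x_0)) = \sum_{i=k}^{N_n-1} l(x_{u^\star}(i, x_0), u^\star(i))$, so that truncating the horizon does not alter the optimal cost of the remaining trajectory; the remaining steps are routine bookkeeping.
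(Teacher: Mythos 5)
Your proposal is correct and follows essentially the same route as the paper's own (very short) proof: the feedback assignments $\mu_{N_{n+k}}(x(n+k)) = u^\star(k)$ make the closed loop trajectory coincide with the open loop trajectory $x_{u^\star}(\cdot, x_0)$, whence hypothesis \eqref{ANMPC:thm:stepsize shortening:eq1} is literally the relaxed Lyapunov inequality \eqref{ANMPC:thm:stability of adaptive NMPC:eq1} at the instants $n, \ldots, n+\overline{k}$. Your additional Bellman tail-optimality step, showing $\mu_{N_n-k}(x_{u^\star}(k,x_0)) = u^\star(k)$ is a legitimate choice of minimizer so that the stage-cost terms match, is a point the paper leaves implicit in the proof and only remarks on informally afterwards ("due to the principle of optimality the optimal control problems within the next $\overline{k}$ iterations are already solved"), so your write-up is simply a more careful rendering of the same argument.
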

\begin{proof}
	The assertions follows directly from the fact that for $\mu_{N_{n + k}}(x(n + k))= u^\star(k)$ the closed loop satisfies $x(n + k) = x_{u^\star}(k, x(n))$. Hence, \eqref{ANMPC:thm:stability of adaptive NMPC:eq1} follows from \eqref{ANMPC:thm:stepsize shortening:eq1}. 
\end{proof}

Note that the result of Theorem \ref{ANMPC:thm:stepsize shortening} can be extended to consider an $m$--step feedback as defined in \cite{GPSW2010} by supposing $\overline{k} \geq m - 1$. 
With the choice $N_{n+k}=N_n-k$, due to the principle of optimality we obtain that the optimal control problems within the next $\overline{k}$ NMPC iterations are already solved since $\mu_{N_n-k}(x(n+k))$ can be obtained from the optimal control sequence $u^\star(\cdot)$ computed at time $n$. This implies that the most efficient way for the reducing strategy is not to reduce $N_n$ itself but rather to reduce the horizons $N_{n + k}$ by $k$ for the subsequent sampling instants $n + 1, \ldots, n + \overline{k}$, i.e., we choose the initial guess of the horizon $N_{n+1} = N_n-1$. Still, if the a posteriori estimate is used, the evaluation of \eqref{ANMPC:thm:stepsize shortening:eq1} requires the solution of an additional optimal control problem in each step.

In order to to use the \textit{a priori} estimate given by Theorem \ref{Preliminaries:thm:apriori variante2} the following result can be used as a shortening strategy:

\begin{theorem}\label{ANMPC:thm:stepsize shortening2}
	Consider an optimal control problem \eqref{Setup:open loop control}, \eqref{Setup:open loop solution} with initial value $x_0 = x(n)$ and $N_n, \hat{N} \in \N$, $N_n \geq \hat{N} \geq 2$ and denote the optimal control sequence by $u^\star$. Moreover, $\overline{\alpha} \in (0, 1)$ is supposed to be fixed inducing some $\overline{\gamma}(\cdot)$ via \eqref{Preliminaries:thm:apriori variante2:eq1}. If there exists an integer $\overline{k} \in \N_0$, $0 \leq \overline{k} < N_n - \hat{N} - 1$ such that for all $0 \leq k \leq \overline{k}$ there exist $\gamma_n > 0$, $\gamma_{n} < \overline{\gamma}(N_n - k)$ satisfying
	\begin{align}
		\label{ANMPC:thm:stepsize shortening2:eq1}
		V_{\hat{N}}(x_{u^\star}(N_n - \hat{N}, x_0)) & \leq (\gamma_{n} + 1) \max_{j = 2, \ldots, \hat{N}} l(x_{u^\star}(N_n - j, x_0), \mu_{j - 1}(x_{u^\star}(N_n - j, x_0))) \\
		\label{ANMPC:thm:stepsize shortening2:eq2}
		V_{j_k}(x_{u^\star}(N_n- j_k, x_0)) & \leq (\gamma_{n} + 1) l(x_{u^\star}(N_n - j_k, x_0), \mu_{j_k}(x_{u^\star}(N_n - j_k, x_0)))
	\end{align}
	for all $j_k \in \{\hat{N} + 1, \ldots, N_n - k\}$. Then, setting $N_{n + k}= N_n - k$ and $\mu_{N_{n + k}}(x(n + k))= u^\star(k)$ for $0 \leq k \leq \overline{k}$, inequality \eqref{ANMPC:thm:stability of adaptive NMPC:eq1} holds for $k = n, \ldots, n + \overline{k}$ with $\alpha = \overline{\alpha}$.
\end{theorem}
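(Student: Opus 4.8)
The plan is to reduce the claim to a finite family of applications of the a priori estimate of Theorem \ref{Preliminaries:thm:apriori variante2}, one for each index $k \in \{0, \ldots, \overline{k}\}$, all evaluated along the \emph{single} open loop trajectory $x_{u^\star}(\cdot, x_0)$ computed at time $n$. First I would repeat the elementary observation from the proof of Theorem \ref{ANMPC:thm:stepsize shortening}: since $\mu_{N_{n+k}}(x(n+k)) = u^\star(k)$, the closed loop recursion \eqref{Setup:closed loop solution} reproduces the open loop prediction, so that $x(n+k) = x_{u^\star}(k, x_0)$ for $0 \leq k \leq \overline{k}+1$, which follows by induction on $k$. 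By Bellman's principle of optimality the tail $(u^\star(k), \ldots, u^\star(N_n-1))$ is an optimal control sequence for the horizon $N_n-k$ problem with initial value $x(n+k)$, whence $x_{u_{N_n-k}}(j, x(n+k)) = x_{u^\star}(k+j, x_0)$ for every admissible $j$.

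The core step is then to recognise that, after this shift-and-restart identification, conditions \eqref{ANMPC:thm:stepsize shortening2:eq1} and \eqref{ANMPC:thm:stepsize shortening2:eq2} are exactly the two inequalities of Assumption \ref{Preliminaries:ass:apriori2} written for the shortened problem with horizon $N = N_n-k$, initial value $x(n+k)$ and constant $\gamma = \gamma_n$. Indeed, propagating $(N_n-k)-\hat{N}$ and $(N_n-k)-j$ steps of the shortened open loop solution yields the evaluation points $x_{u^\star}(N_n-\hat{N}, x_0)$ and $x_{u^\star}(N_n-j, x_0)$ occurring in \eqref{ANMPC:thm:stepsize shortening2:eq1}, while $(N_n-k)-j_k$ steps yield $x_{u^\star}(N_n-j_k, x_0)$ in \eqref{ANMPC:thm:stepsize shortening2:eq2}; dividing through by $\gamma_n+1$ reproduces Assumption \ref{Preliminaries:ass:apriori2} verbatim. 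I would use the bound $\overline{k} < N_n-\hat{N}-1$ here to guarantee $N_n-k \geq \hat{N}$ and that $\{\hat{N}+1, \ldots, N_n-k\}$ is the correct non-empty index range for every $k \leq \overline{k}$.

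It remains to verify the two hypotheses of Theorem \ref{Preliminaries:thm:apriori variante2}, namely $(\gamma+1)^{N-\hat{N}} > \gamma^{N-\hat{N}+2}$ and that the induced local suboptimality degree is at least $\overline{\alpha}$. This is where the definition of $\overline{\gamma}(N_n-k)$ as the value of $\gamma$ at which \eqref{Preliminaries:thm:apriori variante2:eq1} returns $\overline{\alpha}$ enters. I would show that the right hand side of \eqref{Preliminaries:thm:apriori variante2:eq1} is strictly decreasing in $\gamma$ for fixed horizon: by logarithmic differentiation $\frac{d}{d\gamma}\log\big(\gamma^{N-\hat{N}+2}/(\gamma+1)^{N-\hat{N}}\big) = \frac{(N-\hat{N})+2\gamma+2}{\gamma(\gamma+1)} > 0$, so the subtracted ratio increases and $\alpha$ decreases. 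Consequently $\gamma_n < \overline{\gamma}(N_n-k)$ forces $\alpha(N_n-k) > \overline{\alpha} > 0$, and the strict positivity $\alpha(N_n-k) > 0$ is precisely the required inequality $(\gamma+1)^{N-\hat{N}} > \gamma^{N-\hat{N}+2}$. The a priori analysis underlying Theorem \ref{Preliminaries:thm:apriori variante2} then delivers the relaxed Lyapunov inequality \eqref{Preliminaries:prop:trajectory a posteriori estimate:eq1} at $x(n+k)$ with $\alpha = \alpha(N_n-k) \geq \overline{\alpha}$; since $l \geq 0$ and $f(x(n+k), u^\star(k)) = x(n+k+1)$, the term $\alpha(N_n-k)\, l(\cdot) \geq \overline{\alpha}\, l(\cdot)$ weakens this to \eqref{ANMPC:thm:stability of adaptive NMPC:eq1} with $\alpha = \overline{\alpha}$ at each instant $n, \ldots, n+\overline{k}$, as claimed.

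The main obstacle I anticipate is the bookkeeping of the second paragraph: confirming that the principle of optimality makes the quantities in \eqref{ANMPC:thm:stepsize shortening2:eq1}--\eqref{ANMPC:thm:stepsize shortening2:eq2}, which are all read off the single trajectory $x_{u^\star}(\cdot, x_0)$, coincide exactly with the per-step data demanded by Assumption \ref{Preliminaries:ass:apriori2} for each shortened horizon $N_n-k$. The monotonicity argument for $\overline{\gamma}(\cdot)$ is routine but must be recorded, since it is what upgrades the strict inequality $\gamma_n < \overline{\gamma}(N_n-k)$ to the guaranteed degree $\alpha(N_n-k) \geq \overline{\alpha}$.
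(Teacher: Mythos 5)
Your proposal is correct and follows essentially the same route as the paper's own proof: identify the closed loop with the single open loop trajectory via the principle of optimality, recognize \eqref{ANMPC:thm:stepsize shortening2:eq1}--\eqref{ANMPC:thm:stepsize shortening2:eq2} as Assumption \ref{Preliminaries:ass:apriori2} for each shortened horizon $N_n-k$, and apply Theorem \ref{Preliminaries:thm:apriori variante2} at each step to get the relaxed Lyapunov inequality with $\alpha \geq \overline{\alpha}$. Your write-up is in fact more careful than the paper's, which leaves the trajectory bookkeeping and the strict monotonicity of $\gamma \mapsto \alpha$ (needed to turn $\gamma_n < \overline{\gamma}(N_n-k)$ into $\alpha(N_n-k) \geq \overline{\alpha}$, and proved in the paper only later, in Lemma \ref{ANMPC:lem:bijectivity}) implicit.
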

\begin{proof}
	Since \eqref{ANMPC:thm:stepsize shortening2:eq1}, \eqref{ANMPC:thm:stepsize shortening2:eq2} hold for $k = 0$ with $\gamma_n > 0$ and $\gamma_n < \overline{\gamma}(N_n)$, Theorem \ref{Preliminaries:thm:apriori variante2} guarantees that the local suboptimality degree is at least as large as $\overline{\alpha}$. If $\overline{k} > 0$ holds, we can make use of the fact that for $\mu_{N_{n + k}}(x(n + k))= u^\star(k)$ the closed loop satisfies $x(n + k) = x_{u^\star}(k, x(n))$. By \eqref{ANMPC:thm:stepsize shortening2:eq1}, \eqref{ANMPC:thm:stepsize shortening2:eq2}, we obtain Assumption \ref{ANMPC:ass:enhanced stabilizing} to hold along the closed loop. Accordingly, the assertion follows from Theorem \ref{Preliminaries:thm:apriori variante2} which concludes the proof.
\end{proof}

Similar the a posteriori case, the shortening strategy given by Theorem \ref{ANMPC:thm:stepsize shortening2} can be extended to consider an $m$--step feedback as defined in \cite{GPSW2010} by supposing $\overline{k} \geq m - 1$. Note that while the a priori estimate from Theorem \ref{Preliminaries:thm:apriori variante2} is slightly more conservative than the result from Proposition \ref{Preliminaries:prop:trajectory a posteriori estimate}, it is also computationally less demanding if the value $\hat{N}$ is small.

The shortening strategies induced by Theorems \ref{ANMPC:thm:stepsize shortening} and \ref{ANMPC:thm:stepsize shortening2} can be extended to cover the case of practical stability. To this end, the inequalities \eqref{ANMPC:thm:stepsize shortening:eq1}, \eqref{ANMPC:thm:stepsize shortening2:eq1} and \eqref{ANMPC:thm:stepsize shortening2:eq2} have to be replaced by their practical equivalents given in \cite[Proposition 14]{GP2009} and \cite[Theorem 21]{GP2009}.

In contrast to these efficient and simple shortening strategies it is quite difficult to obtain efficient methods for prolongating the optimization horizon $N_n$. In order to provide a simple prolongating strategy, we invert the approach of Theorem \ref{ANMPC:thm:stepsize shortening}:

\begin{theorem}\label{ANMPC:thm:stepsize prolongation}
	Consider an optimal control problem \eqref{Setup:open loop control}, \eqref{Setup:open loop solution} with initial value $x_0 = x(n)$ and $N_n \in \N$. Moreover, for fixed $\overline{\alpha} \in (0, 1)$ supposed Assumption \ref{ANMPC:ass:stabilizable} to hold. Then, any algorithm which iteratively increases the optimization horizon $N_n$ terminates in finite time and computes a horizon length $N_n$ such that \eqref{ANMPC:thm:stability of adaptive NMPC:eq1} holds with local suboptimality degree $\overline{\alpha}$.
\end{theorem}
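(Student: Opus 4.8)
The plan is to reduce the claim to a simple reachability argument: Assumption \ref{ANMPC:ass:stabilizable} furnishes a finite horizon threshold beyond which the relaxed Lyapunov inequality holds with degree $\overline{\alpha}$, and any procedure that strictly increases the integer horizon must reach this threshold after finitely many steps. Concretely, I would first fix the current measurement $x_0 = x(n)$ and invoke Assumption \ref{ANMPC:ass:stabilizable} to obtain a finite horizon length $\overline{N} = N(x(n)) \in \N$ such that \eqref{Preliminaries:prop:trajectory a posteriori estimate:eq1} holds with $\alpha(N) \geq \overline{\alpha}$ for every $N \geq \overline{N}$. By the definition of the local suboptimality degree as the largest admissible $\alpha$ in \eqref{Preliminaries:prop:trajectory a posteriori estimate:eq1} at the fixed state $x(n)$, the bound $\alpha(N) \geq \overline{\alpha}$ is precisely the statement that \eqref{ANMPC:thm:stability of adaptive NMPC:eq1} is satisfied for the pair $(x(n), N)$ whenever $N \geq \overline{N}$.

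I would then track the candidate horizons generated by the prolongation loop. Since this sequence is strictly increasing and takes values in $\N$, every increment is at least one, so after at most $\overline{N} - N_n$ iterations the candidate horizon exceeds $\overline{N}$. At the latest at this point the termination test $\tilde{\alpha} \geq \overline{\alpha}$ in Algorithm \ref{ANMPC:alg:algorithm} succeeds by the preceding paragraph; the loop may of course stop earlier at some horizon below $\overline{N}$ where \eqref{ANMPC:thm:stability of adaptive NMPC:eq1} happens to hold, which---because $\alpha(\cdot)$ need not be monotone in the horizon---is equally admissible. In either case only finitely many iterations are performed and the returned $N_n$ satisfies \eqref{ANMPC:thm:stability of adaptive NMPC:eq1} with local suboptimality degree $\overline{\alpha}$, which is the assertion.

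The only delicate point is the finite termination, which rests on two ingredients: the finiteness of $\overline{N}$ supplied by Assumption \ref{ANMPC:ass:stabilizable}, and the fact that the increments of $N_n$ are bounded below by a positive constant. The latter holds automatically because $N_n$ is integer-valued, so a strict increase forces a step of at least one per iteration; had the horizon been permitted to grow by vanishing amounts, finite termination could fail despite the existence of $\overline{N}$. No telescoping or summation argument of the kind used in the proof of Theorem \ref{ANMPC:thm:stability of adaptive NMPC} is required, since the statement concerns only a single NMPC step at the fixed time instant $n$.
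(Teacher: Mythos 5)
Your proposal is correct and follows essentially the same route as the paper, whose entire proof is the one-line remark that the claim ``follows directly from Assumption \ref{ANMPC:ass:stabilizable}''; your reachability argument---finiteness of $\overline{N}$ from the assumption plus integer-valued, strictly increasing horizons forcing termination after at most $\overline{N} - N_n$ iterations---is exactly the content that remark leaves implicit. Your added observation that termination could fail if increments were allowed to vanish is a fair point, but it is moot here since horizons are integers, so nothing beyond the paper's argument is actually needed.
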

\begin{proof}
	Follows directly from Assumption \ref{ANMPC:ass:stabilizable}.
\end{proof}

Note that the prolongation strategy described in Theorem \ref{ANMPC:thm:stepsize prolongation} only requires Assumption \ref{ANMPC:ass:stabilizable} to hold. This allows us to use any of the suboptimality estimates stated in Section \ref{Section:Setup and Preliminaries}. Unfortunately, if \eqref{ANMPC:thm:stability of adaptive NMPC:eq1} does not hold, it is in general difficult to assess by how much $N_n$ should be increased such that \eqref{ANMPC:thm:stability of adaptive NMPC:eq1} holds for the increased $N_n$. The most simple strategy of increasing $N_n$ by one in each iteration shows satisfactory results in practice, however, when starting the iteration with $N_n$, in the worst case \eqref{ANMPC:thm:stability of adaptive NMPC:eq1} has to be checked $\overline{N} - N_n + 1$ times at each sampling instant. In contrast to the shortening strategy, the principle of optimality cannot be used here to establish a relation between the optimal control problems for different $N_n$ and, moreover, these problems may exhibit different solution structures which makes it a hard task to provide a suitable initial guess for the optimization algorithm. 

\subsection{Advanced Adaptation Strategies}

Since the shortening strategies based on both the a posteriori and the a priori estimates can be implemented with negligible additional computational effort, we focus on the prolongation of the horizon. To reduce the additional effort, we analyze the relationsship of $\alpha(N_n)$ and $\gamma(N_n)$ given by Theorem \ref{Preliminaries:thm:apriori variante2}. If we consider $\overline{\alpha} \in (0, 1)$, we obtain a lower bound for $N_n$ from \eqref{Preliminaries:thm:apriori variante2:eq1} by
\begin{align}
	\label{ANMPC:eq:N-alpha-gamma relation}
	N_n \geq \left\lceil \hat{N} + \frac{2 \ln(\gamma(N_n)) - \ln(1 - \overline{\alpha})}{\ln(\gamma(N_n) + 1) - \ln(\gamma(N_n))} \right\rceil =: \Phi(N_n)
\end{align}
for fixed $x(n)$, $\hat{N}$ and $\overline{\alpha}$. Since we want to guarantee local suboptimality degree $\overline{\alpha}$ and $N_n$ to be as small as possible, we seek a horizon length $N_n$ satisfying $N_n = \Phi(N_n)$, i.e. a fixed point of the function $\Phi(\cdot)$.


\begin{theorem}\label{ANMPC:thm:fixed point}
	Consider $N_n, \hat{N} \in \N$, $N_n \geq \hat{N} \geq 2$, and $\overline{\alpha} \in (0, 1)$ to be fixed and $\gamma(N_n)$ to minimally satisfy Assumption \ref{Preliminaries:ass:apriori2}. If for a given $n \in \N_0$ there exists a constant $\theta \in [0, 1)$ such that the function $\Phi(\cdot)$ defined in \eqref{ANMPC:eq:N-alpha-gamma relation} satisfies
	\begin{align}
		\label{ANMPC:thm:fixed point:eq2}
		| \Phi(\Phi(N_n)) - \Phi(N_n) | \leq \theta | \Phi(N_n) - N_n | \qquad \theta \in [0, 1) \; \forall N_n \geq \hat{N}.
	\end{align}
	and $\Phi^k(N_n) \geq \hat{N}$ for all $k \in \N$, then there exists $N_n^\star \in \N$ with $N_n^\star = \Phi(N_n^\star)$ and $\Phi^k(N_n) \rightarrow N_n^\star$, $k \rightarrow \infty$. If we use $N_n = N_n^\star$ in Algorithm \ref{ANMPC:alg:algorithm} for all $n \in \N_0$ and Assumption \ref{ANMPC:ass:enhanced stabilizing} holds for $x = x(n)$ and all $n \in \N_0$, then the closed loop solution \eqref{Setup:closed loop solution} is asymptotically stable and exhibits local suboptimality degree $\alpha(N_n^\star) \geq \overline{\alpha}$.
\end{theorem}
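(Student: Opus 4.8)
The plan is to split the argument into two parts matching the two assertions: first the existence of, and convergence to, the fixed point $N_n^\star$, and then the stability and suboptimality of the resulting closed loop.

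For the fixed-point part I would read \eqref{ANMPC:thm:fixed point:eq2} as a contraction estimate on the iterates of $\Phi$ and exploit that $\Phi$ takes integer values. Set $N^{(0)} := N_n$ and $N^{(k)} := \Phi^k(N_n)$. Since $\Phi^k(N_n) \geq \hat{N}$ for all $k \in \N$ by hypothesis, I may apply \eqref{ANMPC:thm:fixed point:eq2} with argument $N^{(k-1)}$ to obtain $|N^{(k+1)} - N^{(k)}| \leq \theta |N^{(k)} - N^{(k-1)}|$ for every $k \geq 1$. Iterating this estimate yields the geometric bound $|N^{(k+1)} - N^{(k)}| \leq \theta^k |N^{(1)} - N^{(0)}|$, so that the consecutive differences tend to zero as $k \to \infty$ because $\theta \in [0,1)$.

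The decisive observation is that the $N^{(k)}$ are natural numbers, since $\Phi(\cdot)$ is defined through a ceiling in \eqref{ANMPC:eq:N-alpha-gamma relation}. Hence, as soon as $\theta^k |N^{(1)} - N^{(0)}| < 1$ --- which happens for all sufficiently large $k$ --- the integer-valued difference $|N^{(k+1)} - N^{(k)}|$ must vanish, i.e. $N^{(k+1)} = N^{(k)}$. Denoting this common value by $N_n^\star$, we get $\Phi(N_n^\star) = N^{(k+1)} = N^{(k)} = N_n^\star$, so $N_n^\star$ is a fixed point, and the sequence $\Phi^k(N_n)$ is eventually constant and therefore converges to $N_n^\star$. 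This integer structure is exactly what lets me avoid the continuity or completeness argument that the weaker-than-Banach hypothesis \eqref{ANMPC:thm:fixed point:eq2} would otherwise require, since it controls only the iterate pairs $(N,\Phi(N))$ and not all pairs.

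For the stability part I would first translate the fixed-point relation into a suboptimality guarantee. By construction of $\Phi$ in \eqref{ANMPC:eq:N-alpha-gamma relation}, the inequality $N_n^\star \geq \Phi(N_n^\star)$ --- which holds with equality at the fixed point --- is precisely the condition, obtained from \eqref{Preliminaries:thm:apriori variante2:eq1} by solving for the horizon, under which the a priori estimate of Theorem \ref{Preliminaries:thm:apriori variante2} yields local suboptimality degree at least $\overline{\alpha}$; using that $\gamma(N_n^\star)$ minimally satisfies Assumption \ref{Preliminaries:ass:apriori2} and that $\alpha$ in \eqref{Preliminaries:thm:apriori variante2:eq1} is monotone in the horizon, the ceiling only improves the estimate, so $\alpha(N_n^\star) \geq \overline{\alpha}$. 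Consequently the relaxed Lyapunov inequality \eqref{ANMPC:thm:stability of adaptive NMPC:eq1} holds along the closed loop with $\overline{\alpha}$ at every time instant. Invoking Theorem \ref{ANMPC:thm:stability of adaptive NMPC} together with Assumption \ref{ANMPC:ass:enhanced stabilizing} then produces the performance estimate \eqref{ANMPC:thm:stability of adaptive NMPC:eq2} with $\alpha_C > 0$, while the strict decrease of $V_{N^\star}$, with $N^\star = \max_n N_n^\star$, furnished by \eqref{ANMPC:thm:stability of adaptive NMPC:eq1} gives asymptotic stability by the standard Lyapunov argument. I expect the main obstacle to lie here rather than in the iteration: one must carefully check that the relation $N_n^\star = \Phi(N_n^\star)$, in which $\gamma$ is itself evaluated at the current horizon and state $x(n)$, indeed feeds back into Theorem \ref{Preliminaries:thm:apriori variante2} to yield $\alpha(N_n^\star) \geq \overline{\alpha}$, and that $N^\star = \max_n N_n^\star$ remains finite so that $V_{N^\star}$ is a legitimate common Lyapunov function.
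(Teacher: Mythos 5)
Your proposal is correct and follows essentially the same route as the paper's proof: iterate the contraction estimate \eqref{ANMPC:thm:fixed point:eq2} to get geometric decay of consecutive differences, use the integer-valuedness of $\Phi$ (the paper leaves this slightly more implicit) to conclude the iteration becomes stationary at a fixed point $N_n^\star$, and then obtain $\alpha(N_n^\star)\geq\overline{\alpha}$ from the construction of $\Phi$ via \eqref{Preliminaries:thm:apriori variante2:eq1}, with stability following from Assumption \ref{ANMPC:ass:enhanced stabilizing} and Theorem \ref{ANMPC:thm:stability of adaptive NMPC}.
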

\begin{proof}
	Since $\gamma(N_n)$ satisfies all requirements of Theorem \ref{Preliminaries:thm:apriori variante2}, we can obtain an estimate $\alpha(N_n)$ via \eqref{Preliminaries:thm:apriori variante2:eq1}. In order to guarantee a certain degree of suboptimality $\overline{\alpha}$ we have to show $\overline{\alpha} \leq \alpha(N_n) = \frac{(\gamma + 1)^{N_n - \hat{N}} - \gamma^{N_n - \hat{N} + 2}}{(\gamma + 1)^{N_n - \hat{N}}}$. This can be solved for $N_n$ giving $N_n \geq \Phi(N_n)$ with $\Phi(\cdot)$ from \eqref{ANMPC:eq:N-alpha-gamma relation}. Due to \eqref{ANMPC:thm:fixed point:eq2} we have
	\begin{align}
		\label{ANMPC:thm:fixed point:proof:eq1}
		| \Phi^{k}(N_n) - \Phi^{k-1}(N_n) | \leq \theta^{k-1} | \Phi(N_n) - N_n |.
	\end{align}
	Since $\theta \in [0, 1)$ the right hand side of \eqref{ANMPC:thm:fixed point:proof:eq1} tends to zero. Hence, there exists an index $\overline{k} \in \N$ such that $\theta^{\overline{k} - 1} | \Phi(N_n) - N_n | < 1$. Defining the sequence of optimization horizons via $(N_n^{(i)})_{i \in \N_0} := (\Phi^{i}(N_n))_{i \in \N_0}$ we obtain $N_n^{(j)} = N_n^{(k)} \geq \hat{N}$ for all $j, k \geq \overline{k}$. Hence, $(N_n^{(i)})_{i \in \N_0}$ is converging and $N_n^\star = \Phi(N_n^\star)$ holds for $N_n^\star = N_n^{(\overline{k})}$.\\
	Choosing $N_n = N_n^\star$, the local suboptimality degree satisfies $\alpha(N_n) \geq \overline{\alpha}$ by construction of $\Phi(\cdot)$. Hence, a new initial value can be obtained by implementing the controller in a receding horizon fashion. Since we can apply this procedure along the resulting trajectory, i.e. for all $n \in \N_0$, asymptotic stability of the closed loop solution \eqref{Setup:closed loop solution} follows by Assumption \ref{ANMPC:ass:enhanced stabilizing} and Theorem \ref{ANMPC:thm:stability of adaptive NMPC} and $\alpha(N_n^\star) \geq \overline{\alpha}$ follows directly from Theorem \ref{Preliminaries:thm:apriori variante2}.
\end{proof}

Note that, in general, we cannot a priori check whether $\Phi(\cdot)$ satisfies \eqref{ANMPC:thm:fixed point:eq2}. Moreover, an algorithm derived from Theorem \ref{ANMPC:thm:fixed point} may cause overshoots. Numerical experience has shown that $\sigma = 5$ is a suitable choice to bound the change of the the horizon length, yet, this variable should be chosen with respect to the considered problem. Additionally, numerical simulations indicate that the ``best'' choice of $\sigma$ depends on the occuring horizon lengths $N_n$, i.e. larger horizons allow for larger choices of $\sigma$.

Theorem \ref{ANMPC:thm:fixed point} can also be utilized to shorten the horizon. However, the computation of $N^\star$ requires nonnegliable effort. Hence, this strategy should only be considered when $\alpha(N) < \overline{\alpha}$. Yet, $N_{i + 1} := \Phi(N_{i})$ may be a suitable choice for the optimization horizon in the subsequent optimal control problem.

Different from the fixed point idea, a map $\Psi(\cdot)$ can be designed which generates a sequence of horizons $(N_n^{(i)})$ via $N_n^{(i + 1)} := \Psi(N_n^{(i)})$ such that the suboptimality estimate $\alpha(N_n^{(i)})$ is monotonely increasing:

\begin{lemma}\label{ANMPC:lem:monotonicity alpha}
	Suppose $N_n, \hat{N} \in \N$, $N_n \geq \hat{N} \geq 2$, $\overline{\alpha} \in (0, 1)$ and $0 \leq \delta < 1 - \alpha(N_n)$ are given and Assumption \ref{Preliminaries:ass:apriori2} holds. Suppose there exists a constant $\vartheta > 0$ such that $\gamma(\tilde{N}) \leq \vartheta \gamma(N_n)$ holds for
	\begin{align}
		\label{ANMPC:lem:monotonicity alpha:eq1}
		\tilde{N} & \geq \left\lceil \hat{N} + \left( \frac{ \ln \left( \left( \frac{\gamma(N_n)}{\gamma(N_n) + 1}\right)^{N_n - \hat{N}} - \frac{\delta}{\gamma(N_n)^2} \right) - 2 \ln (\vartheta) }{ \ln \left( \vartheta \gamma(N_n) \right) - \ln \left( \vartheta \gamma(N_n) + 1 \right) } \right) \right\rceil =: \Psi(N_n),
	\end{align}
	then $\alpha(\tilde{N})$ as defined in \eqref{Preliminaries:thm:apriori variante2:eq1} with $\gamma = \gamma(\tilde{N})$ from Assumption \ref{Preliminaries:ass:apriori2} satisfies
	\begin{align}
		\label{ANMPC:lem:monotonicity alpha:eq2}
		\alpha(\tilde{N}) \geq \alpha(N_n) + \delta
	\end{align}
\end{lemma}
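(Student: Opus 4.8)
The plan is to reduce the claimed lower bound on $\alpha(\tilde N)$ to a single scalar inequality between the ``defect'' quantities $1-\alpha(\cdot)$, and then to verify that the map $\Psi(\cdot)$ in \eqref{ANMPC:lem:monotonicity alpha:eq1} has been constructed precisely to make that inequality tight. First I would rewrite the suboptimality formula \eqref{Preliminaries:thm:apriori variante2:eq1} in the more convenient product form $1-\alpha(N) = \gamma(N)^2\left(\tfrac{\gamma(N)}{\gamma(N)+1}\right)^{N-\hat N}$, so that the assertion \eqref{ANMPC:lem:monotonicity alpha:eq2} becomes the equivalent statement $1-\alpha(\tilde N) \le \bigl(1-\alpha(N_n)\bigr) - \delta$. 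Writing $\gamma := \gamma(N_n)$ for brevity, the whole proof then amounts to upper bounding $\gamma(\tilde N)^2\left(\tfrac{\gamma(\tilde N)}{\gamma(\tilde N)+1}\right)^{\tilde N - \hat N}$ by the right-hand side.

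Next I would exploit the monotonicities built into this expression. Since $x\mapsto x^2$ and $x\mapsto x/(x+1)$ are both increasing on $\R_0^+$, the hypothesis $\gamma(\tilde N)\le \vartheta\gamma$ yields $1-\alpha(\tilde N) \le \vartheta^2\gamma^2\left(\tfrac{\vartheta\gamma}{\vartheta\gamma+1}\right)^{\tilde N-\hat N}$. Because the base $\tfrac{\vartheta\gamma}{\vartheta\gamma+1}$ lies strictly in $(0,1)$ and $\tilde N \ge \Psi(N_n)$, replacing the exponent $\tilde N-\hat N$ by the smaller (pre-ceiling) value $P := \Psi(N_n)-\hat N$ can only enlarge the right-hand side, so that $1-\alpha(\tilde N)\le \vartheta^2\gamma^2\left(\tfrac{\vartheta\gamma}{\vartheta\gamma+1}\right)^{P}$.

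It then remains to check that $\vartheta^2\gamma^2\left(\tfrac{\vartheta\gamma}{\vartheta\gamma+1}\right)^{P}$ equals exactly $(1-\alpha(N_n))-\delta$; this is a direct back-substitution of the definition \eqref{ANMPC:lem:monotonicity alpha:eq1}. Taking logarithms of $\left(\tfrac{\vartheta\gamma}{\vartheta\gamma+1}\right)^{P}$ and inserting the expression for $P$ collapses, after exponentiation and multiplication by $\vartheta^2\gamma^2$, to $\gamma^2\left(\tfrac{\gamma}{\gamma+1}\right)^{N_n-\hat N}-\delta = (1-\alpha(N_n))-\delta$, as desired. Here I would record that the logarithm in \eqref{ANMPC:lem:monotonicity alpha:eq1} is well defined precisely because its argument $\left(\tfrac{\gamma}{\gamma+1}\right)^{N_n-\hat N}-\tfrac{\delta}{\gamma^2}$ is positive, which is equivalent to $\delta < \gamma^2\left(\tfrac{\gamma}{\gamma+1}\right)^{N_n-\hat N}=1-\alpha(N_n)$, i.e.\ exactly the standing hypothesis $\delta<1-\alpha(N_n)$. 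Chaining the two displayed bounds with this equality yields $1-\alpha(\tilde N)\le (1-\alpha(N_n))-\delta$, which is \eqref{ANMPC:lem:monotonicity alpha:eq2}.

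The main obstacle I anticipate is bookkeeping rather than conceptual: one must track the direction of each inequality carefully, since the base of the power is below $1$ (so a larger exponent \emph{decreases} the term) while the $\vartheta$-scaling \emph{increases} both $\gamma^2$ and $\gamma/(\gamma+1)$, and a single reversed monotonicity would break the chain. A secondary point is the ceiling in \eqref{ANMPC:lem:monotonicity alpha:eq1}: rounding $\hat N+P$ up only enlarges $\tilde N$ and hence, the base lying in $(0,1)$, only strengthens the upper bound, so it is harmless; one should also note $\tilde N\ge\hat N$ so that $\alpha(\tilde N)$ is legitimately defined through Theorem \ref{Preliminaries:thm:apriori variante2}.
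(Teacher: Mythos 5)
Your proof is correct and follows essentially the same route as the paper's: rewriting \eqref{Preliminaries:thm:apriori variante2:eq1} as $1-\alpha(N)=\gamma(N)^2\bigl(\tfrac{\gamma(N)}{\gamma(N)+1}\bigr)^{N-\hat N}$, overestimating via $\gamma(\tilde N)\le\vartheta\gamma(N_n)$, and using the negativity of $\ln\bigl(\tfrac{\vartheta\gamma}{\vartheta\gamma+1}\bigr)$ together with the definition of $\Psi$ (you exponentiate the definition forwards, the paper takes logarithms backwards, which is the same computation). Your explicit remark that positivity of the logarithm's argument is exactly the hypothesis $\delta<1-\alpha(N_n)$ is a small point of care the paper leaves implicit.
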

\begin{proof}
	In order to show $\alpha(\tilde{N}) \geq \alpha(N) + \delta$ we use \eqref{Preliminaries:thm:apriori variante2:eq1} in \eqref{ANMPC:lem:monotonicity alpha:eq2} which gives us
	\begin{align*}
		\gamma(\tilde{N})^2 \left( \frac{\gamma(\tilde{N})}{\gamma(\tilde{N}) + 1} \right)^{\tilde{N} - \hat{N}} \leq \gamma(N)^2 \left( \frac{\gamma(N)}{\gamma(N) + 1}\right)^{N - \hat{N}} - \delta.
	\end{align*}
	Overestimating the left hand side using $\gamma(\tilde{N}) \leq \vartheta \gamma(N)$ this leaves us to show
	\begin{align*}
		\vartheta^2 \gamma(N)^2 \left( \frac{\vartheta \gamma(N)}{\vartheta \gamma(N) + 1} \right)^{\tilde{N} - \hat{N}} \leq \gamma(N)^2 \left( \frac{\gamma(N)}{\gamma(N) + 1}\right)^{N - \hat{N}} - \delta
	\end{align*}
	to guarantee \eqref{ANMPC:lem:monotonicity alpha:eq2}. Since $\vartheta > 0$, this inequality is equivalent to
	\begin{align*}
		\left( \tilde{N} - \hat{N} \right) \left[ \ln \left( \frac{ \vartheta \gamma(N) }{ \vartheta \gamma(N) + 1 } \right) \right] \leq \ln \left( \left( \frac{\gamma(N)}{\gamma(N) + 1}\right)^{N - \hat{N}} - \frac{\delta}{\gamma(N)^2} \right) - 2 \ln (\vartheta)
	\end{align*}
	Using negative definiteness of $\ln \left( \frac{ \vartheta \gamma(N) }{ \vartheta \gamma(N) + 1 } \right)$ and \eqref{ANMPC:lem:monotonicity alpha:eq1} the assertion follows.
\end{proof}

Similar to $\Phi(\cdot)$ from Theorem \ref{ANMPC:thm:fixed point} the map $\Psi((\cdot)$ may be used to shorten the horizon, a feature which can be avoided easily:

\begin{lemma}\label{ANMPC:lem:monotonicity phi}
	If $\vartheta \geq 1$ and $\delta \geq 0$ hold, then we have $\Psi(N) > N$ for $\Psi(\cdot)$ from \eqref{ANMPC:lem:monotonicity alpha:eq1}.
\end{lemma}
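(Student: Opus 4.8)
The plan is to reduce the claimed strict inequality $\Psi(N) > N$, which is a statement about two integers, to a single scalar inequality and then to dispatch the latter by elementary monotonicity. I would write $m := N - \hat{N} \geq 0$, abbreviate $\gamma := \gamma(N)$, and call $Q$ the fraction appearing inside the ceiling in \eqref{ANMPC:lem:monotonicity alpha:eq1}, so that $\Psi(N) = \lceil \hat{N} + Q\rceil = \hat{N} + \lceil Q\rceil$. Since $N \in \N$, the relation $\Psi(N) > N$ is equivalent to $\lceil Q\rceil \geq m+1$, which holds if and only if $Q > m$. Thus the entire lemma collapses to proving the scalar inequality $Q > m$.

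The next step is to fix the sign of the denominator of $Q$. Because $\vartheta\gamma > 0$ we have $\vartheta\gamma < \vartheta\gamma+1$, hence $\ln(\vartheta\gamma) - \ln(\vartheta\gamma+1) = \ln\frac{\vartheta\gamma}{\vartheta\gamma+1} < 0$. Multiplying $Q > m$ by this negative quantity reverses the inequality, so $Q > m$ is equivalent to
\[ \ln\!\left(\left(\tfrac{\gamma}{\gamma+1}\right)^{m} - \tfrac{\delta}{\gamma^{2}}\right) - 2\ln\vartheta < m\,\ln\tfrac{\vartheta\gamma}{\vartheta\gamma+1}. \]
At this point I would check that the argument of the first logarithm is positive, so that everything is well defined: by \eqref{Preliminaries:thm:apriori variante2:eq1} one has $\gamma^{2}(\gamma/(\gamma+1))^{m} = 1-\alpha(N_n)$, whence the argument equals $(1-\alpha(N_n)-\delta)/\gamma^{2} > 0$ thanks to the standing hypothesis $\delta < 1-\alpha(N_n)$ from Lemma \ref{ANMPC:lem:monotonicity alpha}.

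Gathering the right-hand side into a single logarithm and applying the strictly increasing exponential, the inequality to establish becomes
\[ \left(\tfrac{\gamma}{\gamma+1}\right)^{m} - \tfrac{\delta}{\gamma^{2}} < \vartheta^{2}\left(\tfrac{\vartheta\gamma}{\vartheta\gamma+1}\right)^{m}. \]
This I would close by a short chain of bounds. Since $\delta \geq 0$ the left-hand side is at most $(\gamma/(\gamma+1))^{m}$; since $t \mapsto t/(t+1)$ is increasing on $(0,\infty)$ and $\vartheta \geq 1$ gives $\vartheta\gamma \geq \gamma$, we get $\gamma/(\gamma+1) \leq \vartheta\gamma/(\vartheta\gamma+1)$, and raising to the nonnegative power $m$ preserves this; finally $\vartheta^{2} \geq 1$ supplies the remaining factor.

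The main obstacle I foresee is strictness. The chain just described yields only a weak inequality, and equality holds precisely when $\vartheta = 1$ \emph{and} $\delta = 0$ at once (in that degenerate case $Q = m$ and $\Psi(N) = N$). The strict conclusion $\Psi(N) > N$ therefore has to draw on at least one hypothesis being strict: the monotonicity estimate is strict as soon as $\vartheta > 1$, and the $\delta$-estimate is strict as soon as $\delta > 0$. I would accordingly either sharpen the relevant step under the generic hypotheses, or state the conclusion as $\Psi(N) \geq N$ under $\vartheta \geq 1,\ \delta \geq 0$ alone, taking care throughout to track the strict sign correctly through the reversal across the negative denominator and the ceiling.
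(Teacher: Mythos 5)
Your argument is correct and, in substance, rests on the same elementary fact as the paper's proof (monotonicity of $t \mapsto \ln(1+1/t)$, equivalently of $t \mapsto t/(t+1)$), but it is organized differently. The paper first specializes to $\delta = 0$, where \eqref{ANMPC:lem:monotonicity alpha:eq1} simplifies so that the quantity inside the ceiling becomes $\hat{N} + \bigl( (N-\hat{N})(\ln(\gamma(N)+1)-\ln\gamma(N)) + 2\ln\vartheta \bigr) / \bigl( \ln(\vartheta\gamma(N)+1)-\ln(\vartheta\gamma(N)) \bigr)$, invokes the ratio bound $\frac{\ln(x+1)-\ln x}{\ln(\vartheta x+1)-\ln(\vartheta x)} > 1$, and then disposes of $\delta > 0$ by monotonicity: raising $\delta$ lowers the numerator in \eqref{ANMPC:lem:monotonicity alpha:eq1} while the denominator stays negative, so $\Psi(N)$ can only grow. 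You instead treat general $\delta$ in one pass: strip the ceiling, multiply through by the negative denominator, and exponentiate to the equivalent log-free inequality $(\gamma/(\gamma+1))^{m} - \delta/\gamma^{2} < \vartheta^{2}(\vartheta\gamma/(\vartheta\gamma+1))^{m}$ with $m = N-\hat{N}$, which falls to a three-link chain of elementary bounds. A genuine plus of your route is the verification that the logarithm's argument is positive, via $(\gamma/(\gamma+1))^{m} - \delta/\gamma^{2} = (1-\alpha(N_n)-\delta)/\gamma^{2} > 0$ from \eqref{Preliminaries:thm:apriori variante2:eq1} and the standing hypothesis $\delta < 1-\alpha(N_n)$ of Lemma \ref{ANMPC:lem:monotonicity alpha}; the paper never checks this.

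The strictness issue you raise is not a defect of your proof but of the lemma itself, and you have diagnosed it exactly. At $\vartheta = 1$, $\delta = 0$, which the hypotheses admit, the fraction equals $m$ and hence $\Psi(N) = N$, so the strict conclusion is false. The paper's proof glosses over this corner: its ratio bound is asserted with strict inequality although equality holds at $\vartheta = 1$ (indeed, as typeset, the denominator $\ln(\vartheta(x+1)) - \ln(\vartheta x)$ equals $\ln(x+1)-\ln x$, making the ratio identically $1$; the intended denominator is $\ln(\vartheta x+1)-\ln(\vartheta x)$). The correct conclusion under $\vartheta \geq 1$, $\delta \geq 0$ is therefore $\Psi(N) \geq N$, strict as soon as $\vartheta > 1$ or $\delta > 0$ --- exactly what your chain delivers. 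This causes no harm downstream: in the proof of Theorem \ref{ANMPC:thm:monotone iteration} the lemma is applied with $\delta = \overline{\alpha} - \alpha(N_n) > 0$, so strictness is available precisely where it is needed.
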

\begin{proof}
	In the special case $\delta = 0$, \eqref{ANMPC:lem:monotonicity alpha:eq1} simplifies to
	\begin{align*}
		\Psi(N) := \left\lceil \hat{N} + \frac{ \left( N - \hat{N} \right) \left( \ln \left( \gamma(N) + 1 \right) - \ln \left( \gamma(N) \right) \right) + 2 \ln \left( \vartheta \right) }{ \ln \left( \vartheta \gamma(N) + 1 \right) - \ln \left( \vartheta \gamma(N) \right) } \right\rceil.
	\end{align*}
	Since $\frac{\ln(x + 1) - \ln(x)}{\ln(\vartheta(x + 1)) - \ln(\vartheta(x))} > 1$ and $\vartheta \geq 1$ hold, we obtain $\Psi(N) > N$. Moreover, increasing $\delta$ results in an enlarged value $\Psi(N)$ showing the assertion.
\end{proof}

Obtaining a suitable approximation of $\vartheta$ is the most crucial part of the monotone prolongation method. In order to avoid computing the open loop optimal control \eqref{Setup:open loop control} for all initial values $x \in \X$ and all $N \geq \hat{N}$ and deriving the corresponding values $\gamma(\cdot)$ to obtain $\vartheta$, one can iteratively update the value of $\vartheta$ by setting $\vartheta := \max\left\{ \vartheta, \gamma(N_n^{(i+1)}) / \gamma(N_n^{(i)}) \right\}$. This method is not only computationally cheap and gives us a lower bound for $\vartheta$, it also moderates a possible overshoot. Note that this approximation has to be restarted for each $n \in \N_0$.

Next, we use Lemma \ref{ANMPC:lem:monotonicity alpha} to show that a prolongation strategy based on $\Psi(\cdot)$ in Step 2 of Algorithm \ref{ANMPC:alg:algorithm} terminates in finite time:

\begin{theorem}\label{ANMPC:thm:monotone iteration}
	Suppose Assumptions \ref{Preliminaries:ass:apriori2} and \ref{ANMPC:ass:stabilizable} hold and suppose $\vartheta \geq 1$. Then a finite number of iterations $N_n^{(i + 1)} := \Psi(N^{(i)})$ with $\Psi(\cdot)$ according to Lemma \ref{ANMPC:lem:monotonicity alpha} are required to obtain a horizon length $N_n^\star$ which guarantees local suboptimality degree $\alpha(N_n^\star) \geq \overline{\alpha}$. 
\end{theorem}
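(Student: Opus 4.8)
The plan is to combine the monotone increase of the suboptimality estimate furnished by Lemma \ref{ANMPC:lem:monotonicity alpha} with the strict growth of the horizon guaranteed by Lemma \ref{ANMPC:lem:monotonicity phi}, and to invoke Assumption \ref{ANMPC:ass:stabilizable} to ensure that the target degree $\overline{\alpha}$ is actually attainable at a finite horizon. The key idea is that a single \emph{fixed} increment $\delta$ may be used throughout the iteration for as long as the current estimate has not yet reached $\overline{\alpha}$, so that each step buys a uniform gain in $\alpha$.

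First I would fix $\delta \in (0, 1 - \overline{\alpha})$ and consider the sequence $N_n^{(i+1)} := \Psi(N_n^{(i)})$ generated by $\Psi(\cdot)$ from \eqref{ANMPC:lem:monotonicity alpha:eq1} with this $\delta$, started from $N_n^{(0)} = N_n \geq \hat{N}$. Since $\vartheta \geq 1$ and $\delta \geq 0$, Lemma \ref{ANMPC:lem:monotonicity phi} gives $\Psi(N_n^{(i)}) > N_n^{(i)}$; because $\Psi$ is integer valued through the ceiling in \eqref{ANMPC:lem:monotonicity alpha:eq1}, this strict inequality forces $N_n^{(i+1)} \geq N_n^{(i)} + 1$. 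In particular the horizons stay above $\hat{N}$, so that Assumption \ref{Preliminaries:ass:apriori2}, and hence the $\gamma(\cdot)$-bound underlying Lemma \ref{ANMPC:lem:monotonicity alpha}, remains applicable along the whole iteration.

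Next I would run the natural dichotomy at each step. If $\alpha(N_n^{(i)}) \geq \overline{\alpha}$, the stopping criterion of Step 2 in Algorithm \ref{ANMPC:alg:algorithm} is met and we set $N_n^\star := N_n^{(i)}$. Otherwise $\alpha(N_n^{(i)}) < \overline{\alpha}$, whence $1 - \alpha(N_n^{(i)}) > 1 - \overline{\alpha} > \delta$, which is exactly the admissibility condition $0 \leq \delta < 1 - \alpha(N_n^{(i)})$ demanded by Lemma \ref{ANMPC:lem:monotonicity alpha}. Applying that lemma with $\tilde{N} = N_n^{(i+1)} = \Psi(N_n^{(i)})$ then yields $\alpha(N_n^{(i+1)}) \geq \alpha(N_n^{(i)}) + \delta$. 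Iterating this bound gives $\alpha(N_n^{(i)}) \geq \alpha(N_n^{(0)}) + i\,\delta$ for every $i$ at which the iteration has not yet stopped, and since $\alpha(\cdot) \leq 1$ by \eqref{Preliminaries:prop:trajectory a posteriori estimate:eq2}, the criterion $\alpha(N_n^{(i)}) \geq \overline{\alpha}$ must be triggered after at most $\lceil (\overline{\alpha} - \alpha(N_n^{(0)}))/\delta \rceil$ steps. This proves finite termination; Assumption \ref{ANMPC:ass:stabilizable} then certifies that the horizon $N_n^\star$ so produced is finite, since a finite $\overline{N}$ with $\alpha(N) \geq \overline{\alpha}$ for all $N \geq \overline{N}$ exists and the strictly increasing integer horizons $N_n^{(i)}$ reach it in at most $\overline{N} - N_n^{(0)}$ steps.

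The main obstacle I anticipate is not the counting argument but the bookkeeping around the parameters $\delta$ and $\vartheta$ on which $\Psi$ depends. One must ensure the admissibility bound $\delta < 1 - \alpha(N_n^{(i)})$ persists across iterations, which I would handle by freezing $\delta$ strictly below $1 - \overline{\alpha}$ and halting as soon as $\overline{\alpha}$ is attained, and one must ensure that $\vartheta$ stays a valid overestimate $\gamma(N_n^{(i+1)}) \leq \vartheta\,\gamma(N_n^{(i)})$ at every step, for which the update $\vartheta := \max\{\vartheta, \gamma(N_n^{(i+1)})/\gamma(N_n^{(i)})\}$ discussed above can be invoked. Once these two points are secured, the per-step gain $\delta$ is genuinely uniform and the finite bound follows.
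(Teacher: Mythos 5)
Your proof is correct, but its center of gravity differs from the paper's. The paper's own proof never invokes the $\alpha$-gain conclusion of Lemma \ref{ANMPC:lem:monotonicity alpha} at all: it takes $\delta = \overline{\alpha} - \alpha(N_n)$ (the current deficit, which is positive as long as the stopping criterion fails), uses Lemma \ref{ANMPC:lem:monotonicity phi} --- a purely formula-based property of $\Psi$ requiring only $\vartheta \geq 1$ and $\delta \geq 0$ --- to conclude that the integer-valued horizons strictly increase, and then terminates because Assumption \ref{ANMPC:ass:stabilizable} guarantees the stopping criterion $\alpha(N_n^{(i)}) \geq \overline{\alpha}$ is met once $N_n^{(i)} \geq \overline{N}$, i.e.\ after at most $\overline{N} - N_n^{(0)}$ iterations. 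You state exactly this argument in your closing sentences, so your proposal is complete as written. Your headline argument, however --- freezing $\delta \in (0, 1-\overline{\alpha})$ and accumulating a uniform per-step gain $\alpha(N_n^{(i+1)}) \geq \alpha(N_n^{(i)}) + \delta$ --- needs the hypothesis of Lemma \ref{ANMPC:lem:monotonicity alpha} to hold at \emph{every} iterate with the \emph{same} $\vartheta$ that enters $\Psi$, namely $\gamma(N_n^{(i+1)}) \leq \vartheta \gamma(N_n^{(i)})$ for each $i$. This is not granted by the theorem's hypotheses ($\vartheta \geq 1$ alone), and the update $\vartheta := \max\{\vartheta, \gamma(N_n^{(i+1)})/\gamma(N_n^{(i)})\}$ cannot repair it: that update happens after $N_n^{(i+1)} = \Psi(N_n^{(i)})$ was computed with the old $\vartheta$, so a violated ratio bound at step $i$ invalidates the gain inequality at that very step, and enlarging $\vartheta$ afterwards does not restore it (the paper offers this update only as a heuristic). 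What your route buys, when the ratio bound does hold along the iteration, is an explicit iteration count $\lceil (\overline{\alpha} - \alpha(N_n^{(0)}))/\delta \rceil$ independent of $\overline{N}$; what the paper's leaner route buys is freedom from any assumption on $\gamma(\cdot)$ beyond Assumption \ref{Preliminaries:ass:apriori2}, which is precisely why the theorem can be stated with such weak hypotheses.
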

\begin{proof}
	Using the stopping criterion $\alpha(N_n) \geq \overline{\alpha}$ of Algorithm \ref{ANMPC:alg:algorithm} to define $\delta = \overline{\alpha} - \alpha(N_n)$, we always have $\delta > 0$. Hence, by Lemma \ref{ANMPC:lem:monotonicity phi} we can conclude that the horizon length $N_n^{(i)}$ is increasing in every step of the iteration due to $\vartheta \geq 1$. Since Assumption \ref{ANMPC:ass:stabilizable} guarantees the existence of a finite horizon length $\overline{N} \in \N$, $\overline{N} < \infty$, such that $\alpha(N_n) \geq \overline{\alpha}$ holds for all $N_n \geq \overline{N}$, the iteration $N_n^{(i + 1)} := \Psi(N_n^{(i)})$ terminates in finite time. Hence, choosing $N_n = N_n^\star$ we have that $\alpha(N_n) \geq \overline{\alpha}$ is guaranteed by the stopping criterion. 
\end{proof}

Note that we do not assume $\gamma(\cdot)$ in \eqref{ANMPC:eq:N-alpha-gamma relation} to be computed in a specific way but only to satisfy Assumption \ref{Preliminaries:ass:apriori2}. Hence, using Theorems \ref{ANMPC:thm:fixed point} and \ref{ANMPC:thm:monotone iteration} a suitable horizon $N_n$ can be obtained such that $\alpha(N_n) \geq \overline{\alpha}$ holds even if some $\tilde{\gamma}(\cdot) \geq \gamma(\cdot)$ are used. Therefore, also the a priori practical estimate \cite[Theorem 21]{GP2009} or the global estimate presented in \cite[Theorem 4.5 and 5.8]{GR2008} can be applied. Additionally, \eqref{Preliminaries:thm:apriori variante2:eq1} defines a bijective mapping relating $\alpha(N_n)$ and $\gamma(N_n)$ which allows us to use the a posteriori estimates of Proposition \ref{Preliminaries:prop:trajectory a posteriori estimate} and \cite[Proposition 14]{GP2009}:

\begin{lemma}\label{ANMPC:lem:bijectivity}
	Given $N_n, \hat{N} \in \N$, $N_n, \N_0 \geq 2$, the mapping $\Gamma: [0, \infty) \rightarrow (-\infty, 1]$, $\Gamma(x) := 1 - \frac{x^{N_n - \hat{N} + 2}}{(x + 1)^{N_n - \hat{N}}}$ is bijective.
\end{lemma}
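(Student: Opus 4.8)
The plan is to reduce the claim to the elementary monotonicity of a single auxiliary function. Writing $m := N_n - \hat{N}$, the hypothesis $N_n \geq \hat{N} \geq 2$ gives $m \in \N_0$, and $\Gamma(x) = 1 - g(x)$ with $g(x) := x^{m+2}/(x+1)^m$. Since $\Gamma$ is an affine, orientation-reversing transform of $g$, the map $\Gamma$ is a bijection from $[0,\infty)$ onto $(-\infty, 1]$ if and only if $g$ is a bijection from $[0,\infty)$ onto $[0,\infty)$. Thus it suffices to show that $g$ is continuous, strictly increasing, satisfies $g(0) = 0$, and diverges to $+\infty$ as $x \to \infty$.

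The key step is strict monotonicity, which I would obtain through the logarithmic derivative to avoid differentiating the quotient directly. For $x > 0$ one has $\ln g(x) = (m+2)\ln x - m \ln(x+1)$, hence
\[
\frac{g'(x)}{g(x)} = \frac{m+2}{x} - \frac{m}{x+1} = \frac{2x + m + 2}{x(x+1)}.
\]
For every $x > 0$ and every $m \in \N_0$ the numerator $2x + m + 2$ and the denominator $x(x+1)$ are strictly positive, and $g(x) > 0$, so $g'(x) > 0$. Therefore $g$ is strictly increasing on $(0,\infty)$; combined with continuity at $0$ and $g(0) = 0$ this yields strict monotonicity on all of $[0,\infty)$, and in particular injectivity of $g$, hence of $\Gamma$.

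Surjectivity then follows from the intermediate value theorem: $g$ is continuous with $g(0) = 0$, and since $\bigl(x/(x+1)\bigr)^m \to 1$ we have $g(x) = x^2 \bigl(x/(x+1)\bigr)^m \to +\infty$ as $x \to \infty$, so $g$ attains every value in $[0,\infty)$. Consequently $\Gamma = 1 - g$ is a continuous, strictly decreasing bijection from $[0,\infty)$ onto $(-\infty,1]$, which is the assertion. I do not expect a genuine obstacle here: the only point requiring a little care is the endpoint $x = 0$, where the logarithmic derivative is undefined, so the monotonicity computation is carried out on $(0,\infty)$ and extended to $0$ by continuity; the degenerate case $m = 0$, where $g(x) = x^2$, is covered by the same argument.
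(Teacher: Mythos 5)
Your proof is correct. The overall skeleton matches the paper's --- injectivity via strict monotonicity plus surjectivity via continuity and the limiting values $\Gamma(0)=1$, $\Gamma(x)\to-\infty$ --- but the key monotonicity step is done by a genuinely different method. The paper works directly with $\Gamma$ and proves $\Gamma(x) > \Gamma(x+\varepsilon)$ by purely algebraic manipulation: the inequality is rearranged into
\begin{align*}
\left( 1 + \tfrac{\varepsilon}{x + 1} \right)^{N_n - \hat{N}} < \left( 1 + \tfrac{\varepsilon}{x} \right)^{N_n - \hat{N} + 2},
\end{align*}
which holds for $x>0$ since the base on the right is larger and so is the exponent, with $x=0$ treated separately via $\Gamma(\varepsilon)<1$. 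You instead factor out $\Gamma = 1 - g$ and compute the logarithmic derivative $g'(x)/g(x) = (2x+m+2)/(x(x+1)) > 0$, again handling the endpoint $x=0$ separately. The paper's route is more elementary (no calculus at all, only monotonicity of $t \mapsto t^k$), whereas yours is more systematic: a single sign computation replaces the inequality gymnastics, it makes the strict decrease of $\Gamma$ quantitative, and it sits naturally alongside the paper's subsequent use of Newton's method in \eqref{ANMPC:eq:gamma via Newton method}, which requires exactly the differentiability you exploit. One cosmetic remark: strict monotonicity on $[0,\infty)$ follows from $g(0)=0<g(x)$ for $x>0$ together with strict increase on $(0,\infty)$; continuity at $0$ is only needed for the surjectivity argument, not for monotonicity, so you could streamline that sentence.
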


\begin{proof}
	In order to show bijectivity, we use continuity of $\Gamma(\cdot)$ on $[0, \infty)$ and $\Gamma(0) = 1$ and $\lim_{x \rightarrow \infty} \Gamma(x) = - \infty$ to show surjectivity. In order to obtain injectivity, we show that $\Gamma(\cdot)$ is strictly monotone on $[0, \infty)$, i.e. for all $x \in [0, \infty)$ and $\varepsilon > 0$ we have $\Gamma(x) > \Gamma(x + \varepsilon)$. Using the definition of $\Gamma(\cdot)$ in this last inequality, we obtain $\left( 1 + \varepsilon/ (x + 1) \right)^{N_n - \hat{N}} < \left( 1 + \varepsilon / x \right)^{N_n - \hat{N} + 2}$ which holds true for $x > 0$ since $N_n - \hat{N} \geq 0$. Last, considering $x = 0$, we see that $\Gamma(\varepsilon) < 1$ for all $\varepsilon > 0$ and hence the assertion follows.
\end{proof}

Now, Lemma \ref{ANMPC:lem:bijectivity} allows us to solve \eqref{Preliminaries:thm:apriori variante2:eq1} for $\gamma(N_n)$. Since $\Gamma(\cdot)$ is twice continuously differentiable on $[0, \infty)$, this can be done effectively using Newton's method
\begin{align}
	\label{ANMPC:eq:gamma via Newton method}
	\gamma^{(k + 1)}(N_n) := \gamma^{(k)}(N_n) + \frac{1 - \alpha - \frac{\gamma^{(k)}(N_n)^{N_n - \hat{N} + 2}}{(\gamma^{(k)}(N_n) + 1)^{N_n - \hat{N}}}}{\left( \frac{\gamma^{(k)}(N_n)}{\gamma^{(k)}(N_n) + 1} \right)^{N_n - \hat{N} + 1} \left( N_n - \hat{N} + 2 + 2 \gamma^{(k)}(N_n) \right) }
\end{align}
where $\gamma^{(0)}(N_n) := 1$ can be used as the initial value due to strict monotonicity of $\Gamma(\cdot)$. However, since we do not expect $\gamma(N_n)$ to vary massively along the closed loop, the information from previous steps may be reused.

Using the a posteriori estimate, we obtain an additional degree of freedom since $\hat{N}$ can be chosen arbitrarily within the set $\{ 2, \ldots, N_n \}$. However, it is not clear which value is the best: For one, the smallest possible $\gamma(N_n) > 0$ is obtained by setting $\hat{N} := 2$. Formulas \eqref{ANMPC:eq:N-alpha-gamma relation} and \eqref{ANMPC:lem:monotonicity alpha:eq1}, however, logarithmically depend on $\gamma(N_n)$, i.e small values of $\gamma(N_n)$ might lead to overshoots.

\section{Numerical Results}
\label{Section:Numerical Results}

To illustrate the effectiveness of our adaptation strategies, we consider a digital redesign problem (cf.\ \cite{NG2006}) of an arm/\-rotor/\-platform (ARP) model stated in \cite{FK1996}:
\begin{align*}
	\dot{x}_{1}(t) & = x_{2}(t) + x_{6}(t) x_{3}(t) \\ \displaybreak[0]
	\dot{x}_{2}(t) & = -k_{1} x_{1}(t) / M - b_{1} x_{2}(t) / M + x_{6}(t) x_{4}(t) - m r b_{1} x_{6}(t) / M^{2} \\ \displaybreak[0]
	\dot{x}_{3}(t) & = - x_{6}(t) x_{1}(t) + x_{4}(t) \\ \displaybreak[0]
	\dot{x}_{4}(t) & = - x_{6}(t) x_{2}(t) - k_{1} x_{3}(t) / M - b_{1} x_{4}(t) / M + m r k_{1} / M^{2} \\ \displaybreak[0]
	\dot{x}_{5}(t) & = x_{6}(t) \\ \displaybreak[0]
	\dot{x}_{6}(t) & = -a_{1} x_{5}(t) - a_{2} x_{6}(t) + a_{1} x_{7}(t) + a_{3} x_{8}(t) - p_{1} x_{1}(t) - p_{2} x_{2}(t) \\ \displaybreak[0]
	\dot{x}_{7}(t) & = x_{8}(t) \\ \displaybreak[0]
	\dot{x}_{8}(t) & = a_{4} x_{5}(t) + a_{5} x_{6}(t) - a_{4} x_{7}(t) - (a_{5} + a_{6}) x_{8}(t) + u(t) / J
\end{align*}
For the digital redesign a continuous time full-state feedback $u_{0}$ was developed via backstepping such that the output $\zeta(t) := x_{5}(t) - \frac{a_{3}}{a_{1} - a_{2}a_{3}}[x_{6}(t) - a_{3} x_{7}(t)]$ is close to $x_{5}(t)$ and tracks a given reference signal $\zeta_{\text{ref}}(\cdot)$, see \cite[Chapter 7.3.2]{FK1996} for details on the backstepping design and the specification of the model parameters. The resulting continuous time solution of $x_{5,\text{ref}}(\cdot)$ is then used as a reference to compute a sampled--data control with zero order hold, cf. \cite{NG2006}. To solve the sequence of optimal control problems we use a direct approach and employ an SQP method to solve the resulting optimization problem. Here, we consider the initial value $x_0 = (0, 0, 0, 0, 10, 0, 0, 0)^\top$, the cost functional $J_N(x_0, u) = \sum_{i = 0}^{N - 1} \int_{t_i}^{t_{i + 1}} | x_5(t) - x_{5, \text{ref}}(t) | dt$ where the sampling instances are equidistantly fixed via $t_i = t_0 + i T$ with $T = 0.2$, set tolerance levels of both the minimizer and the differential equation solver to $10^{-6}$ and use reference function
\begin{align*}
	\zeta_{\text{ref}}(t) = \left\{ \begin{array}{ll} 10, & t \in [0, 5) \cup [9, 10) \\ 0, & t \in [5, 9) \cup [10, 15) \end{array} \right.,
\end{align*}
cf. Figure \ref{Example:Figure:adaptivity:reference function}(a).
\begin{figure}[!ht]
	\subfloat[Tracking function $\zeta_{\text{ref}}(\cdot)$ (solid) and redesign reference $x_{5, \text{ref}}(\cdot)$ (dashed)]{%
		\includegraphics[width=0.45\textwidth]{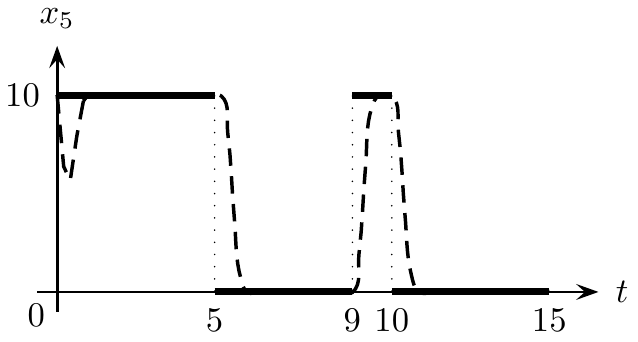}%
	} \hfill
	\subfloat[Horizons for adaptive NMPC with a posteriori estimate (solid) and adaptive NMPC with a priori estimate (dashed)]{%
		\includegraphics[width=0.45\textwidth]{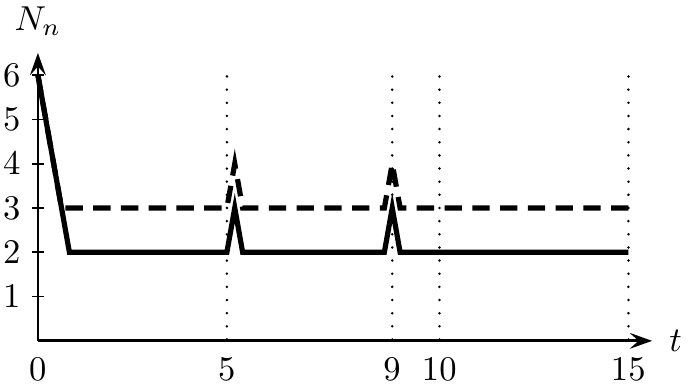}%
	}
	\caption{Tracking function and horizons of the adaptation scheme}
	\label{Example:Figure:adaptivity:reference function}
\end{figure}
For this problem it is known from \cite[Section 5]{GP2009} that the suboptimality estimates of Proposition \ref{Preliminaries:prop:trajectory a posteriori estimate} and Theorem \ref{Preliminaries:thm:apriori variante2} show only poor performance. For this reason we employ the practical variants \cite[Proposition 14 and Theorem 21]{GP2009} with $\varepsilon = 10^{-5}$ and set the lower bound $\overline{\alpha} = 0.5$.\\
Using the a posteriori and a priori estimation techniques within Algorithm \ref{ANMPC:alg:algorithm}, we obtain the evolutions of horizons $N_n$ along the closed loop for the suboptimality bound $\overline{\alpha} = 0.5$ as displayed in Figure \ref{Example:Figure:adaptivity:reference function}(b). In particular, one observes that the less conservative a posteriori algorithms yields smaller optimization horizons which makes the resulting scheme computationally more efficient, see also Table \ref{Example:Table:results adaptive all:zeta}. However, the evaluation of the a posteriori criterion itself is computationally more demanding, see also Figure \ref{Example:Figure:computing time}, below.
\begin{table}[!ht]
	\begin{center}
	\begin{small}
		\begin{tabular}{|c|c||r|r|r||c|c|l|}
			\hline
			\multicolumn{2}{|c||}{Adaptive NMPC} & \multicolumn{3}{c||}{Time in $[10^{-3}s]$} & \multicolumn{3}{c|}{Horizon length} \\
			\multicolumn{1}{|c|}{Implementation} & \multicolumn{1}{c||}{Estimate} & \multicolumn{1}{c|}{$\max$} & \multicolumn{1}{c|}{$\min$} & \multicolumn{1}{c||}{$\varnothing$} & \multicolumn{1}{c|}{$\max$} & \multicolumn{1}{c|}{$\min$} & \multicolumn{1}{c|}{$\varnothing$} \\
			\hline
			Standard NMPC & --- & 									86.50 & 3.23 & 23.14 & 6 & 6 & 6 \\
			Theorem \ref{ANMPC:thm:stepsize prolongation} & a posteriori & 		183.19 & 1.30 & 13.37 & 6 & 2 & 2.39 \\
			Theorem \ref{ANMPC:thm:fixed point} & a posteriori & 			184.79 & 1.49 & 11.55 & 6 & 2 & 2.25 \\
			Theorem \ref{ANMPC:thm:monotone iteration} & a posteriori & 			183.16 & 1.48 & 16.28 & 6 & 2 & 2.43 \\
			Theorem \ref{ANMPC:thm:stepsize prolongation} & a priori & 	226.10 & 2.12 & 17.46 & 6 & 3 & 3.21 \\
			Theorem \ref{ANMPC:thm:fixed point} & a priori & 		219.32 & 1.45 & 18.96 & 6 & 3 & 3.19 \\
			Theorem \ref{ANMPC:thm:monotone iteration} & a priori & 			219.44 & 1.47 & 15.60 & 6 & 3 & 3.13 \\
			\hline
		\end{tabular}
		\caption{Comparison of NMPC results in the tracking type example}
		\label{Example:Table:results adaptive all:zeta}
	\end{small}
	\end{center}
\end{table}

It is also interesting to compare these horizons to the standard NMPC Algorithm with fixed $N$ which needs a horizon of $N=6$ in order to guarantee $\alpha \geq \overline{\alpha}$ along the closed loop. Here, one observes that the required horizon $N_n$ for the adaptive NMPC approach is typically smaller than $N = 6$ for both the a posteriori and the a priori estimate based variant. From Figure \ref{Example:Figure:adaptivity:reference function}(b) one also observes that the horizon is increased at the jump points of the reference function $(\cdot)$, which is the behavior one would expect in a ``critical'' situation and nicely reflects the ability of the adaptive horizon algorithm to adapt to the new situation.\\
Although the algorithm chooses to modify the horizon length throughout the run of the closed loop, one can barely see a difference between the resulting $x_5(\cdot)$ trajectories and the (dashed) reference trajectory given in Figure \ref{Example:Figure:adaptivity:reference function}(a). For this reason, we do not display the closed loop solutions. Instead, we additionally plotted the computing times of the two adaptive NMPC variants in Figure \ref{Example:Figure:computing time}. Again, one can immediately see the spikes in the graph right at the points in which $\zeta_{\text{ref}}(\cdot)$ jumps. This figure also illustrates the disadvantage of the algorithm of having to solve multiple additional optimal control problems whenever $N_n$ is increased, which clearly shows up in the higher computation times at these points, in particular for the computationally more expensive a posteriori estimate. 
\begin{figure}[!ht]
	\begin{center}
		\includegraphics[width=0.59\textwidth]{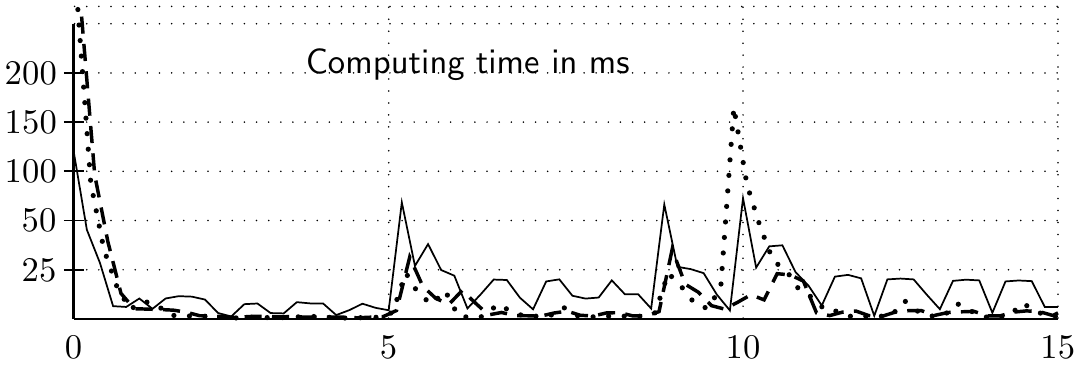}	
		\caption{Computing times of the standard NMPC (solid), adaptive NMPC with a posteriori estimate (dotted) and adaptive NMPC with a priori estimate (dashed)}
		\label{Example:Figure:computing time}
	\end{center}
\end{figure}

Last, we can use Theorem \ref{ANMPC:thm:stability of adaptive NMPC} to obtain the closed loop suboptimality degree $\alpha_C$ of the different implementations shown in Table \ref{Example:Table:results adaptive parameter tracking}. Note that we can restrict ourselves to those time instances where $l(\cdot, \cdot) > \varepsilon$ holds and that according to Theorem \ref{ANMPC:thm:stability of adaptive NMPC} $\alpha_C$ is given by combinations of $C_l(n)$ and $C_\alpha(n)$ while Table \ref{Example:Table:results adaptive parameter tracking} shows minimal and maximal values of $C_l$ and $C_\alpha$. 
\begin{table}[!ht]
	\begin{center}
	\begin{small}
		\begin{tabular}{|c|c||r|r||r|r||r|}
			\hline
			\multicolumn{2}{|c||}{Adaptive NMPC} & \multicolumn{2}{c||}{$C_l$} & \multicolumn{2}{c||}{$C_\alpha$} & \multicolumn{1}{c|}{$\alpha_C$} \\
			\multicolumn{1}{|c|}{Implementation} & \multicolumn{1}{c||}{Estimate} & \multicolumn{1}{c|}{$\min$} & \multicolumn{1}{c||}{$\max$} & \multicolumn{1}{c|}{$\min$} & \multicolumn{1}{c||}{$\max$} & \\
			\hline
			Theorem \ref{ANMPC:thm:stepsize prolongation} & a posteriori 		& 0.9959 &    1.1282 &    1.0000 &    1.0506 &    0.4431 \\
			Theorem \ref{ANMPC:thm:fixed point} & a posteriori			& 1.1078 &    1.1078 &    1.0000 &    1.0000 &    0.4513 \\
			Theorem \ref{ANMPC:thm:monotone iteration} & a posteriori			& 0.9638 &    1.1760 &    1.0000 &    1.1088 &    0.4291 \\
			Theorem \ref{ANMPC:thm:stepsize prolongation} & a priori 		& 0.8824 &    1.1460 &    1.0010 &    1.8321 &    0.4751 \\
			Theorem \ref{ANMPC:thm:fixed point} & a priori			& 0.9480 &    1.5334 &    0.9816 &    1.6457 &    0.3366 \\
			Theorem \ref{ANMPC:thm:monotone iteration} & a priori 			& 0.8835 &    2.0362 &    0.9856 &    1.8360 &    0.2535 \\
			\hline
		\end{tabular}
		\caption{Values of $C_l$, $C_\alpha$ and $\alpha_C$ of Theorem \ref{ANMPC:thm:stability of adaptive NMPC}}
		\label{Example:Table:results adaptive parameter tracking}
	\end{small}
	\end{center}
\end{table}

From Table \ref{Example:Table:results adaptive parameter tracking} we obtain that $\alpha_C$ may deteriorate if the a priori estimate is used while results based on the a posteriori estimate show that $\alpha_C$ is close to the minimal local suboptimality degree $\overline{\alpha} = 0.5$. In either case, the presented adaptation strategies guarantee stability of the closed loop and show a satisfactoring local and closed loop suboptimality degree.

\section{Conclusion}
\label{Section:Conclusion}

We derived various adaptation strategies for the horizon length of an NMPC controller and showed stability and suboptimality of the resulting closed loop trajectory. Moreover, we have shown the practicability and effectiveness of these methods. Future work concerns many parts of this method. Probably the most important point is to improve the \textit{a priori} estimates from Theorem \ref{Preliminaries:thm:apriori variante2} and \cite[Theorem 20]{GP2009} by a more detailed analysis of the parameter $\hat{N}$ and to develop other efficiently computable suboptimality estimates. Moreover, different feasibility conditions as well as development and investigation of alternatives to prolongate or shorten the optimization horizon will be an issue. In particular, combinations of iterates may allow for further insight of the process under control.\\
This work was supported by DFG Grant Gr1569/12 within the Priority Research Program 1305 and the Leopoldina Fellowship Programme LPDS 2009-36.




\end{document}